\providecommand{\U}[1]{\protect\rule{.1in}{.1in}}
\newtheorem{theorem}{Theorem}[section]
\newtheorem{corollary}[theorem]{Corollary}
\newtheorem{lemma}[theorem]{Lemma}
\newtheorem{proposition}[theorem]{Proposition}
\theoremstyle{definition}
\newtheorem{definition}[theorem]{Definition}
\newtheorem{example}[theorem]{Example}
\theoremstyle{remark}
\newtheorem{remark}[theorem]{Remark}
\begin{document}
\title{A general framework for island systems}
\author[S.~Foldes]{Stephan Foldes}
\address[Stephan Foldes]{Tampere University of Technology, PL 553, 33101 Tampere, Finland}
\email{stephan.foldes@tut.fi}
\author[E.~K.~Horv\'{a}th]{Eszter K. Horv\'{a}th}
\address[Eszter K. Horv\'{a}th]{Bolyai Institute, University of Szeged, Aradi
v\'{e}rtan\'{u}k tere 1, H-6720 Szeged, Hungary}
\email{horeszt@math.u-szeged.hu}
\author[S.~Radeleczki]{S\'{a}ndor Radeleczki}
\address[S\'{a}ndor Radeleczki]{Institute of Mathematics, University of Miskolc, 3515
Miskolc-Egyetemv\'{a}ros, Hungary}
\email{matradi@uni-miskolc.hu}
\author[T.~Waldhauser]{Tam\'{a}s Waldhauser}
\address[Tam\'{a}s Waldhauser]{Bolyai Institute, University of Szeged, Aradi
v\'{e}rtan\'{u}k tere 1, H-6720 Szeged, Hungary}
\email{twaldha@math.u-szeged.hu}
\keywords{Island system, height function, CD-independent and CDW-independent sets,
admissible system, distant system, island domain, proximity domain,
point-to-set proximity relation, prime implicant, formal concept, convex
subgraph, connected subgraph, projective plane.}

\begin{abstract}
The notion of an island defined on a rectangular board is an elementary
combinatorial concept that occurred first in \cite{czedli}. Results of
\cite{czedli} were starting points for investigations exploring several
variations and various aspects of this notion.

In this paper we introduce a general framework for islands that subsumes all
earlier studied concepts of islands on finite boards, moreover we show that
the prime implicants of a Boolean function, the formal concepts of a formal
context, convex subgraphs of a simple graph, and some particular subsets of
a projective plane also fit into this framework.

We axiomatize those cases where islands have the property of being pairwise comparable or disjoint, or they are distant, introducing the notion of a connective island
domain and of a proximity domain, respectively. In the general case the
maximal systems of islands are characterised by using the concept of an
admissible system. We also characterise all possible island systems in the
case of connective island domains and proximity domains.

\end{abstract}
\maketitle

\section{Introduction\label{section introduction}}

\textquotedblleft ISLAND, in physical geography, a term generally definable as
a piece of land surrounded by water.\textquotedblright\ (Encyclop\ae dia
Britannica, Eleventh Edition, Volume XIV, Cambridge University Press 1910.)
Mathematical models of this definition were introduced and studied by several
authors. These investigations utilized tools from different areas of
mathematics, e.g. combinatorics, coding theory, lattice theory, analysis,
fuzzy mathematics. Our goal is to provide a general setting that unifies these
approaches. This general framework encompasses prime implicants of Boolean
functions and concepts of a formal context as special cases, and it has close
connections to graph theory and to proximity spaces.

The notion of an island as a mathematical concept occurred first in Cz\'{e}dli
\cite{czedli}, where a rectangular board was considered with a real number
assigned to each cell of the board, representing the height of that cell. A
set $S$ of cells forming a rectangle is called an \it island, \rm if the minimum
height of $S$ is greater then the height of any cell around the perimeter of
$S$, since in this case $S$ can become a piece of land surrounded by water
after a flood producing an appropriate water level. The motivation to
investigate such islands comes from Foldes and Singhi \cite{FS}, where islands
on a $1\times n$ board (so-called full segments) played a key role in
characterizing maximal instantaneous codes.

The main result of \cite{czedli} is that the maximum number of islands on an
$m\times n$ board is $\left\lfloor \left(  mn+m+n-1\right)  /2\right\rfloor $.
However, the size of a system of islands (i.e., the collection of all islands appearing for given heights) that is maximal with respect to
inclusion (not with respect to cardinality) can be as low as $m+n-1$ \cite{L}.
Another important observation of \cite{czedli} is that any two islands are
either comparable (i.e. one is contained in the other) or disjoint; moreover,
disjoint islands cannot be too close to each other (i.e. they cannot have
neighboring cells). It was also shown in \cite{czedli} that these properties
actually characterize systems of islands. We refer to such a result as a
\textquotedblleft dry\textquotedblright\ characterization, since it describes
systems of islands in terms of intrinsic conditions, without referring to
heights and water levels.

The above mentioned paper \cite{czedli} of G\'{a}bor Cz\'{e}dli was a starting
point for many investigations exploring several variations and various aspects
of islands. Square islands on a rectangular board have been considered in
\cite{HHNS,L3}, and islands have been studied also on cylindrical and toroidal
boards \cite{BHH}, on triangular boards \cite{HNP,L2}, on higher dimensional
rectangular boards \cite{P} as well as in a continuous setting \cite{LP,PPPSz}%
. If we allow only a given finite subset of the reals as possible heights,
then the problem of determining the maximum number of islands becomes
considerably more difficult; see, e.g. \cite{HTM,HST2, MM}. Islands also
appear naturally as cuts of lattice-valued functions \cite{HST}; furthermore,
order-theoretic properties of systems of islands proved to be of interest on
their own, and they have been investigated in lattices and partially ordered
sets \cite{CzHaSch,CzSch,HR}. The notion of an island is an elementary
combinatorial concept, yet it leads immediately to open problems, therefore it
is a suitable topic to introduce students to mathematical research \cite{MV}.

In this paper we introduce a general framework for islands that subsumes all
of the earlier studied concepts of islands on finite boards. We will
axiomatize those situations where islands have the \textquotedblleft
comparable or disjoint\textquotedblright\ property mentioned above, and we
will also present dry characterizations of systems of islands.

\section{Definitions and examples}

Our landscape is given by a nonempty base set $U$, and a function $h\colon
U\rightarrow\mathbb{R}$ that assigns to each point $u\in U$ its height
$h\left(  u\right)  $. If the minimum height $\min h\left(  S\right)
:=\min\left\{  h\left(  u\right)  \colon u\in S\right\}  $ of a set
$S\subseteq U$ is greater than the height of its surroundings, then $S$ can
become an island if the water level is just below $\min h\left(  S\right)  $.
To make this more precise, let us fix two families of sets $\mathcal{C}%
,\mathcal{K}\subseteq\mathcal{P}\left(  U\right)  $, where $\mathcal{P}\left(
U\right)  $ denotes the power set of $U$. We do not allow islands of arbitrary
\textquotedblleft shapes\textquotedblright: only sets belonging to
$\mathcal{C}$ are considered as candidates for being islands, and the members
of $\mathcal{K}$ describe the \textquotedblleft surroundings\textquotedblright%
\ of these sets.

\begin{definition}
An \emph{island domain} is a pair $\left(  \mathcal{C},\mathcal{K}\right)  $,
where $\mathcal{C}\subseteq\mathcal{K}\subseteq\mathcal{P}\left(  U\right)  $
for some nonempty finite set $U$ such that $U\in\mathcal{C}$. By a
\emph{height function} we mean a map $h\colon U\rightarrow\mathbb{R}$.
\end{definition}

Throughout the paper we will always implicitly assume that $\left(  \mathcal{C}%
,\mathcal{K}\right)  $ is an island domain. We denote the cover relation of
the poset $\left(  \mathcal{K},\subseteq\right)  $ by $\prec$, and we write
$K_{1}\preceq K_{2}$ if $K_{1}\prec K_{2}$ or $K_{1}=K_{2}$.

\begin{definition}
Let $\left(  \mathcal{C},\mathcal{K}\right)  $ be an island domain, let
$h\colon U\rightarrow\mathbb{R}$ be a height function and let $S\in
\mathcal{C}$ be a nonempty set.%
\renewcommand{\theenumi}{(\roman{enumi})}
\renewcommand{\labelenumi}{\theenumi}%

\begin{enumerate}
\item \label{def island (i)}We say that $S$ is a\emph{ pre-island} with
respect to the triple $\left(  \mathcal{C},\mathcal{K},h\right)  $, if every
$K\in\mathcal{K}$ with $S\prec K$ satisfies%
\[
\min h\left(  K\right)  <\min h\left(  S\right)  .
\]

\item \label{def island (ii)}We say that $S$ is an \emph{island} with respect
to the triple $\left(  \mathcal{C},\mathcal{K},h\right)  $, if every
$K\in\mathcal{K}$ with $S\prec K$ satisfies
\[
h\left(  u\right)  <\min h\left(  S\right)  \text{ for all }u\in K\setminus
S.
\]

\end{enumerate}

The \emph{system of (pre-)islands corresponding to }$\left(  \mathcal{C}%
,\mathcal{K},h\right)  $ is the set%
\[
\left\{  S\in\mathcal{C}\setminus\left\{  \emptyset\right\}  \colon S\text{ is
a (pre-)island w.r.t. }\left(  \mathcal{C},\mathcal{K},h\right)  \right\}  .
\]
By a \emph{system of (pre-)islands corresponding to }$\left(  \mathcal{C}%
,\mathcal{K}\right)  $ we mean a set $\mathcal{S}\subseteq\mathcal{C}$ such
that there is a height function $h\colon U\rightarrow\mathbb{R}$ so that the
system of (pre-)islands corresponding to $\left(  \mathcal{C},\mathcal{K}%
,h\right)  $ is $\mathcal{S}$.
\end{definition}

\begin{remark}
\label{remark basic}Let us make some simple observations concerning the above
definition.%
\renewcommand{\theenumi}{(\alph{enumi})}
\renewcommand{\labelenumi}{\theenumi}%

\begin{enumerate}
\item Every nonempty set $S$ in $\mathcal{C}$ is in fact an island for some height function  $h.$

\item If $S$ is an island with respect to $\left(  \mathcal{C},\mathcal{K}%
,h\right)  $, then $S$ is also a pre-island with respect to $\left(
\mathcal{C},\mathcal{K},h\right)  $. The converse is not true in general;
however, if for every nonempty\emph{ }$C\in\mathcal{C}$ and $K\in\mathcal{K}$
with\emph{ }$C\prec K$ we have $\left\vert K\setminus C\right\vert =1$, then
the two notions coincide.

\item The set $U$ is always a (pre-)island. If $S$ is a (pre-)island that is
different from $U$, then we say that $S$ is a \emph{proper (pre-)island}.

\item If $S$ is a pre-island with respect to $\left(  \mathcal{C}%
,\mathcal{K},h\right)  $, then the inequality $\min h\left(  K\right)  <\min
h\left(  S\right)  $ of \ref{def island (i)} holds for all $K\in\mathcal{K}$
with $S\subset K$ (not just for covers of $S)$.

\item Let $\mathcal{C}\subseteq\mathcal{K}^{\prime}\subseteq\mathcal{K}$. It
is easy to see that any $\mathcal{S}\in\mathcal{C}$ which is a pre-island with
respect to the triple $\left(  \mathcal{C},\mathcal{K},h\right)  $ is also a
pre-island with respect to $\left(  \mathcal{C},\mathcal{K}^{\prime},h\right)
$.

\item The numerical values of the height function $h$ are not important; only
the partial ordering that $h$ establishes on $U$ is relevant. In particular,
one could assume without loss of generality that the range of $h$ is contained
in the set $\left\{  0,1,\ldots,\left\vert U\right\vert -1\right\}  $.
\end{enumerate}
\end{remark}

Many of the previously studied island concepts can be interpreted in terms of
graphs as follows.

\begin{example}
\label{example graphs}Let $G=\left(  U,E\right)  $ be a connected simple graph
with vertex set $U$ and edge set $E$; let $\mathcal{K}$ consist of the
connected subsets of $U$, and let $\mathcal{C}\subseteq\mathcal{K}$ such that
$U\in\mathcal{C}$. In this case the second item of Remark~\ref{remark basic}
applies, hence pre-islands and islands are the same. Let us assume that $G$ is
connected, and let $\mathcal{C}$ consist of the connected convex sets of
vertices. (A set is called convex if it contains all shortest paths between
any two of its vertices.) If $G$ is a path, then the islands are exactly the
full segments considered in \cite{FS}, and if $G$ is a square grid (the
product of two paths), then we obtain the rectangular islands of
\cite{czedli}. Square islands on a rectangular board \cite{HHNS,L3}, islands
on cylindrical and toroidal boards \cite{BHH}, on triangular boards
\cite{HNP,L2} and on higher dimensional rectangular boards \cite{P} also fit
into this setting.
\end{example}

Surprisingly, formal concepts and prime implicants are also pre-islands in disguise.

\begin{example}
\label{example context}Let $A_{1},\ldots,A_{n}$ be nonempty sets, and let
$\mathcal{I}\subseteq A_{1}\times\cdots\times A_{n}$. Let us define%
\begin{align*}
U  &  =A_{1}\times\cdots\times A_{n},\\
\mathcal{K}  &  =\left\{  B_{1}\times\cdots\times B_{n}\colon\emptyset\neq
B_{i}\subseteq A_{i},~1\leq i\leq n\right\} \\
\mathcal{C}  &  =\left\{  C\in\mathcal{K}\colon C\subseteq\mathcal{I}\right\}
\cup\{U\},
\end{align*}
and let $h\colon U\longrightarrow\{0,1\}$ be the height function given by%
\[
h\left(  a_{1},\ldots,a_{n}\right)  :=\left\{  \!\!%
\begin{array}
[c]{rl}%
1\text{,} & \text{if }\left(  a_{1},\ldots,a_{n}\right)  \in\mathcal{I};\\
0\text{,} & \text{if }\left(  a_{1},\ldots,a_{n}\right)  \in U\setminus
\mathcal{I};
\end{array}
\right.  \text{ for all }\left(  a_{1},\ldots,a_{n}\right)  \in U.
\]
It is easy to see that the pre-islands corresponding to the triple $\left(
\mathcal{C},\mathcal{K},h\right)  $ are exactly $U$ and the maximal elements
of the poset $\left(  \mathcal{C}\setminus\left\{  U\right\}  ,\subseteq
\right)  $.

Now let $\left(  G,M,\mathcal{I}\right)  $, $\mathcal{I}\subseteq G\times M$
be a formal context, and let us apply the above construction with $A_{1}=G$,
$A_{2}=M$ and $U=A_{1}\times A_{2}$. Then the pre-islands are $U$ and the
concepts of the context $(G,M,\mathcal{I)}$ with nonempty extent and intent
\cite{GW}.

Further, consider the case $A_{1}=\cdots=A_{n}=\{0,1\}$. Then the height
function $h$ is an $n$-ary Boolean function, and it is not hard to check that
the pre-islands corresponding to $\left(  \mathcal{C},\mathcal{K},h\right)  $
are $U$ and the prime implicants of $h$ \cite{CrHa}.
\end{example}

\begin{remark}
\label{rem refinement}For any given island domain $\left(  \mathcal{C}%
,\mathcal{K}\right)  $, maximal families of (pre-)islands are realized by
injective height functions. To see this, let us assume that $h$ is a
non-injective height function, i.e. there exists a number $z$ in the range of
$h$ such that $h^{-1}\left(  z\right)  =\left\{  s_{1},\ldots,s_{m}\right\}  $
with $m\geq2$. The following \textquotedblleft refinement\textquotedblright%
\ procedure constructs another height function $g$ so that every (pre-)island
corresponding to $\left(  \mathcal{C},\mathcal{K},h\right)  $ is also a
(pre-)island with respect to $\left(  \mathcal{C},\mathcal{K},g\right)  $. Let
$y$ be the largest value of $h$ below $z$ (or $z-1$ if $z$ is the minimum
value of the range of $h$), and let $w$ be the smallest value of $h$ above $z$
(or $z+1$ if $z$ is the maximum value of the range of $h$). For any $u\in U$,
we define $g\left(  u\right)  $ by%
\[
g\left(  u\right)  =\left\{  \!\!\renewcommand{\arraystretch}{1.8}%
\begin{array}
[c]{rl}%
y+i\dfrac{w-y}{m+1}, & \text{if }u=s_{i};\\
h\left(  u\right)  , & \text{if }h\left(  u\right)  \neq z.
\end{array}
\right.
\]
By repeatedly applying this procedure we obtain an injective height function
without losing any pre-islands. Note that injective height functions
correspond to linear orderings of $U$ (cf. the last observation of
Remark~\ref{remark basic}).
\end{remark}

\begin{example}
\label{example projective plane}Let $U$ be a finite projective plane of order
$p$, thus $U$ has $m:=p^{2}+p+1$ points. Let $\mathcal{C}=\mathcal{K}$ consist
of the whole plane, the lines, the points and the empty set. Then the greatest
possible number of pre-islands is $p^{2}+2=m-p+1$. Indeed, as explained in
Remark~\ref{rem refinement}, the largest systems of pre-islands emerge with
respect to linear orderings of $U$. So let us consider a linear order on $U$,
and let $\mathbf{0}$ and $\mathbf{1}$ denote the smallest and largest elements
of $U$, respectively. In other words, we have $h\left(  \mathbf{0}\right)
<h(x)<h\left(  \mathbf{1}\right)  $ for all $x\in U\setminus\{\mathbf{0}%
,\mathbf{1}\}$. Clearly, a line is a pre-island iff it does not contain
$\mathbf{0}$, and there are $m-p-1$ such lines. The only other pre-islands are
the point $\mathbf{1}$ and the entire plane, hence we obtain $m-p-1+2=m-p+1$ pre-islands.
\end{example}

It has been observed in \cite{czedli, HNP,HHNS} that any two islands on a
square or triangular grid with respect to a given height function are either
comparable or disjoint. This property is formalized in the following
definition, which was introduced in \cite{CzHaSch}.

\begin{definition}
A family $\mathcal{H}$ of subsets of $U$ is $\operatorname{CD}$%
\emph{-independent} if any two members of $\mathcal{H}$ are either comparable
or disjoint, i.e. for all $A,B\in\mathcal{H}$ at least one of $A\subseteq B$,
$B\subseteq A$ or $A\cap B=\emptyset$ holds.
\end{definition}

Note that $\operatorname{CD}$-independence is also known as laminarity
\cite{LP,PPPSz}. In general, the properties of $\operatorname{CD}%
$-independence and being a system of pre-islands are independent from each
other, as the following example shows.

\begin{example}
\label{example not CD}Let $U=\left\{  a,b,c,d,e\right\}  $ and $\mathcal{K}%
=\mathcal{C}=\left\{  \left\{  a,b\right\}  ,\left\{  a,c\right\}  ,\left\{
b,d\right\}  ,\left\{  c,d\right\}  ,U\right\}  $. Let us define a height
function $h$ on $U$ by $h\left(  a\right)  =h\left(  b\right)  =h\left(
c\right)  =h\left(  d\right)  =1$, $h\left(  e\right)  =0$. It is easy to
verify that every element of $\mathcal{C}$ is a pre-island with respect to
this height function, but $\mathcal{C}$ is not $\operatorname{CD}%
$-independent. On the other hand, consider the $\operatorname{CD}$-independent
family $\mathcal{H}=\left\{  \left\{  a,b\right\}  ,\left\{  c,d\right\}
,U\right\}  $. We claim that $\mathcal{H}$ is not a system of pre-islands. To
see this, assume that $h$ is a height function such that the system of
pre-islands corresponding to $\left(  \mathcal{C},\mathcal{K},h\right)  $ is
$\mathcal{H}$. Let us write out the definition of a pre-island for $S=\left\{
a,b\right\}  $ and $S=\left\{  c,d\right\}  $ with $K=U$:%
\begin{align*}
\min\left(  h\left(  a\right)  ,h\left(  b\right)  \right)   &  >\min h\left(
U\right)  ;\\
\min\left(  h\left(  c\right)  ,h\left(  d\right)  \right)   &  >\min h\left(
U\right)  .
\end{align*}
Taking the minimum of these two inequalities, we obtain%
\[
\min\left(  h\left(  a\right)  ,h\left(  b\right)  ,h\left(  c\right)
,h\left(  d\right)  \right)  >\min h\left(  U\right)  .
\]
This immediately implies that $\min\left(  h\left(  a\right)  ,h\left(
c\right)  \right)  >\min h\left(  U\right)  $. Since the only element of
$\mathcal{K}$ properly containing $\left\{  a,c\right\}  $ is $U$, we can
conclude that $\left\{  a,c\right\}  $ is also a pre-island with respect to
$h$, although $\left\{  a,c\right\}  \notin\mathcal{H}$.
\end{example}

As $\operatorname{CD}$-independence is a natural and desirable property of
islands that was crucial in previous investigations, we will mainly focus on
island domains $\left(  \mathcal{C},\mathcal{K}\right)  $ whose systems of
pre-islands are $\operatorname{CD}$-independent. We characterize such island
domains in Theorem~\ref{thm ID <==> (SZ==>CD)}, and we refer to them as
\emph{connective island domains} (see Definition~\ref{def ID}).

The most fundamental questions concerning pre-islands are the following: Given
an island domain $\left(  \mathcal{C},\mathcal{K}\right)  $ and a family
$\mathcal{H}\subseteq\mathcal{C}$, how can we decide if there is a height
function $h$ such that $\mathcal{H}$ is the system of pre-islands
corresponding to $\left(  \mathcal{C},\mathcal{K},h\right)  $? How can we find
such a height function (if there is one)? Concerning the first question, we
give a dry\ characterization of systems of pre-islands corresponding to
connective island domains in Theorem~\ref{thm ID ==> (HA<==>SZ)}, and in
Corollary~\ref{cor PD ==> (distant<==>strSZ)} we characterize systems of
islands corresponding to so-called \emph{proximity domains} (see
Definition~\ref{def proximity domain}). These results generalize earlier dry
characterizations (see, e.g. \cite{czedli, HNP,HHNS}), since an island domain
$\left(  \mathcal{C},\mathcal{K}\right)  $ corresponding to a graph (cf.
Example~\ref{example graphs}) is always a connective island domain and also a
proximity domain. Concerning the second question, we give a canonical
construction for a height function
(Definition~\ref{def canonical height function}), and we prove in
Sections~\ref{section CD and ID} and \ref{section strict} that this height
function works for pre-islands in connective island domains and for islands in
proximity domains.

\section{Pre-islands and admissible systems\label{section admissible}}

In this section we present a condition that is necessary for being a system of
pre-islands, which will play a key role in later sections. Although this
necessary condition is not sufficient in general, we will use it to obtain a
characterization of \emph{maximal} systems of pre-islands.

\begin{definition}
\label{def admissible}Let $\mathcal{H}\subseteq\mathcal{C}\setminus\left\{
\emptyset\right\}  $ be a family of sets such that $U\in\mathcal{H}$. We say
that $\mathcal{H}$ is \emph{admissible (with respect to }$\left(
\mathcal{C},\mathcal{K}\right)  $\emph{)}, if for every nonempty antichain
$\mathcal{A}\subseteq\mathcal{H}$,%
\begin{equation}
\exists H\in\mathcal{A}\text{ such that }\forall K\in\mathcal{K}:~H\subset
K\implies K\nsubseteq\bigcup\,\mathcal{A}.\label{eq admissible}%
\end{equation}

\end{definition}

\begin{remark}
\label{rem admissible not just for antichains}Let us note that if
$\mathcal{H}$ is admissible, then (\ref{eq admissible}) holds for \emph{all}
nonempty $\mathcal{A}\subseteq\mathcal{H}$ (not just for antichains). Indeed,
if $\mathcal{M}$ denotes the set of maximal members of $\mathcal{A}$, then
$\mathcal{M}$ is an antichain. Thus the admissibility of $\mathcal{H}$ implies
that there is $H\in\mathcal{M}\subseteq\mathcal{A}$ such that for all
$K\in\mathcal{K}$ with $H\subset K$ we have $K\nsubseteq\bigcup\,\mathcal{M}%
=\bigcup\,\mathcal{A}$.
\end{remark}

Obviously, any subfamily of an admissible family is also admissible, provided
that it contains $U$. As we shall see later, in some important special cases a
stronger version of admissibility holds, where the existential quantifier is
replaced by a universal quantifier in (\ref{eq admissible}): for every
nonempty antichain $\mathcal{A}\subseteq\mathcal{H}$,%
\begin{equation}
\forall H\in\mathcal{A~}\forall K\in\mathcal{K}:~H\subset K\implies
K\nsubseteq\bigcup\,\mathcal{A}. \label{eq stronger admissible}%
\end{equation}

\begin{proposition}
\label{prop SZ==>HA}Every system of pre-islands is admissible.
\end{proposition}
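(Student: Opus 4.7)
The plan is to show admissibility by a direct selection argument based on the height values. Let $\mathcal{H}$ be a system of pre-islands, so by definition there is a height function $h \colon U \to \mathbb{R}$ such that $\mathcal{H}$ is the system of pre-islands corresponding to $(\mathcal{C},\mathcal{K},h)$. Note that $U \in \mathcal{H}$ by Remark~\ref{remark basic}(c). Now fix any nonempty antichain $\mathcal{A} \subseteq \mathcal{H}$; our task is to exhibit some $H \in \mathcal{A}$ satisfying the implication in \eqref{eq admissible}.

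The natural choice is to pick $H \in \mathcal{A}$ that minimizes $\min h(H)$; such an $H$ exists because $U$ is finite and hence $\mathcal{A}$ is finite. I would then argue by contradiction: suppose there exists $K \in \mathcal{K}$ with $H \subset K$ and $K \subseteq \bigcup \mathcal{A}$. Since $H$ is a pre-island and $H \subset K$, Remark~\ref{remark basic}(d) gives $\min h(K) < \min h(H)$. Pick $u \in K$ with $h(u) = \min h(K)$, so that $h(u) < \min h(H)$. This forces $u \notin H$, because every point of $H$ has height at least $\min h(H)$. However, $u \in K \subseteq \bigcup \mathcal{A}$, so $u$ lies in some $H' \in \mathcal{A}$, and necessarily $H' \neq H$. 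Then
\[
\min h(H') \leq h(u) < \min h(H),
\]
contradicting the choice of $H$ as the element of $\mathcal{A}$ with smallest minimum height.

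There is no real obstacle in this argument; the only point that requires a moment of care is extending the pre-island condition from covers $S \prec K$ to arbitrary proper supersets $S \subset K$, which is precisely what Remark~\ref{remark basic}(d) provides. With that observation in hand, the whole proof is a one-step minimality argument.
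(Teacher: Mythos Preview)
Your proof is correct and uses essentially the same idea as the paper's: both hinge on the observation that the member of $\mathcal{A}$ with the smallest value of $\min h(\cdot)$ must satisfy \eqref{eq admissible}. The paper phrases this as a global contradiction (assume \eqref{eq admissible} fails for \emph{every} $H\in\mathcal{A}$, then take the minimum over all $\min h(S_i)$), whereas you give the direct version by selecting that minimizing $H$ upfront and verifying it works; the mathematical content is identical.
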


\begin{proof}
Let $h\colon U\rightarrow\mathbb{R}$ be a height function and let
$\mathcal{S}$ be the system of pre-islands corresponding to $\left(
\mathcal{C},\mathcal{K},h\right)  $. Clearly, we have $\emptyset
\notin\mathcal{S}$ and $U\in\mathcal{S}$. Let us assume for contradiction that
there exists an antichain $\mathcal{A}=\left\{  S_{i}:i\in I\right\}
\subseteq\mathcal{S}$ such that (\ref{eq admissible}) does not hold. Then for
every $i\in I$ there exists $K_{i}\in\mathcal{K}$ such that $S_{i}\subset
K_{i}$ and $K_{i}\subseteq\bigcup_{i\in I}S_{i}$. Since $S_{i}$ is a
pre-island, we have%
\[
\min h\left(  S_{i}\right)  >\min h\left(  K_{i}\right)  \geq\min h%
\Bigl(%
\bigcup_{i\in I}S_{i}%
\Bigr)%
\]
for all $i\in I$. Taking the minimum of these inequalities we arrive at the
contradiction%
\[
\min\left\{  \min h\left(  S_{i}\right)  \mid i\in I\right\}  >\min h%
\Bigl(%
\bigcup_{i\in I}S_{i}%
\Bigr)%
.%
\qedhere
\]

\end{proof}

The converse of Proposition~\ref{prop SZ==>HA} is not true in general: it is
straightforward to verify that the family $\mathcal{H}$ considered in
Example~\ref{example not CD} is admissible, but, as we have seen, it is not a
system of pre-islands. However, we will prove in
Proposition~\ref{prop HA==>SZ resze} that for every admissible family
$\mathcal{H}$, there exists a height function such that the corresponding
system of pre-islands contains $\mathcal{H}$. First we give the construction
of this height function, and we illustrate it with some examples.

\begin{definition}
\label{def canonical height function}Let $\mathcal{H}\subseteq\mathcal{C}$ be
an admissible family of sets. We define subfamilies $\mathcal{H}^{\left(
i\right)  }\subseteq\mathcal{H}~\left(  i=0,1,2,\ldots\right)  $ recursively
as follows. Let $\mathcal{H}^{\left(  0\right)  }=\left\{  U\right\}  $. For
$i>0$, if $\mathcal{H}\neq\mathcal{H}^{\left(  0\right)  }\cup\cdots
\cup\mathcal{H}^{\left(  i-1\right)  }$, then let $\mathcal{H}^{\left(
i\right)  }$ consist of all those sets $H\in\mathcal{H}\setminus
(\mathcal{H}^{\left(  0\right)  }\cup\cdots\cup\mathcal{H}^{\left(
i-1\right)  })$ that have the following property:%
\begin{equation}
\forall K\in\mathcal{K}:~H\subset K\implies K\nsubseteq\bigcup\,%
\bigl(%
\mathcal{H}\setminus(\mathcal{H}^{\left(  0\right)  }\cup\cdots\cup
\mathcal{H}^{\left(  i-1\right)  })%
\bigr)%
. \label{eq def canonical height function: set}%
\end{equation}
Since $\mathcal{H}$ is finite and admissible, after finitely many steps we
obtain a partition $\mathcal{H}=\mathcal{H}^{\left(  0\right)  }\cup\cdots
\cup\mathcal{H}^{\left(  r\right)  }$ (cf.
Remark~\ref{rem admissible not just for antichains}). The \emph{canonical
height function corresponding to }$\mathcal{H}$ is the function
$h_{\mathcal{H}}\colon U\rightarrow\mathbb{N}$ defined by%
\begin{equation}
h_{\mathcal{H}}\left(  x\right)  :=\max\left\{  i\in\left\{  1,\ldots
,r\right\}  :x\in\bigcup\,\mathcal{H}^{\left(  i\right)  }\right\}  \text{ for
all }x\in U. \label{eq def canonical height function: height}%
\end{equation}

\end{definition}

Observe that every $\mathcal{H}^{\left(  i\right)  }$ consists of \emph{some}
of the maximal members of $\mathcal{H}\setminus(\mathcal{H}^{\left(  0\right)
}\cup\cdots\cup\mathcal{H}^{\left(  i-1\right)  })=\mathcal{H}^{\left(
i\right)  }\cup\cdots\cup\mathcal{H}^{\left(  r\right)  }$. However, if
$\mathcal{H}$ satisfies (\ref{eq stronger admissible}) for all antichains
$\mathcal{A}\subseteq\mathcal{H}$, then the word \textquotedblleft%
\emph{some}\textquotedblright\ can be replaced by \textquotedblleft%
\emph{all}\textquotedblright\ in the previous sentence, and in this case
$h_{\mathcal{H}}$ can be computed just from $\mathcal{H}$ itself, without
making reference to $\mathcal{K}$. To illustrate this, let us consider a
$\operatorname{CD}$-independent family $\mathcal{H}$. Clearly, for every $u\in
U$, the set of members of $\mathcal{H}$ containing $u$ is a finite chain. The
\emph{standard height function} of $\mathcal{H}$ assigns to each element $u$
the length of this chain, i.e. one less than the number of members of
$\mathcal{H}$ that contain $u$. (Note that the definition of a standard height
function in \cite{HST2} differs slightly from ours.) It is easy to see that if
$\mathcal{H}$ satisfies (\ref{eq stronger admissible}), then the canonical
height function of $h$ coincides with the standard height function. However,
in general the two functions might be different.
Figure~\ref{fig standardeskanonikus} represents the standard and the canonical
height functions for the same $\operatorname{CD}$-independent family, with
greater heights indicated by darker colors. We can see from
Figure~\ref{fig kanonikus} that only two of the four maximal members of
$\mathcal{H}\setminus\left\{  U\right\}  $ belong to $\mathcal{H}^{\left(
1\right)  }$, thus (\ref{eq stronger admissible}) fails here. (In order to
make the picture comprehensible, only members of $\mathcal{C}$ are shown,
although $\mathcal{K}$ is also needed to determine $h_{\mathcal{H}}$
(Figure~\ref{fig kanonikus}). On the other hand, the standard height function
(Figure~\ref{fig standard}) can be read directly from the figure.)

\begin{figure}[h]
\centering
\subfloat[Standard height function] {
\includegraphics[width=0.4\textwidth]{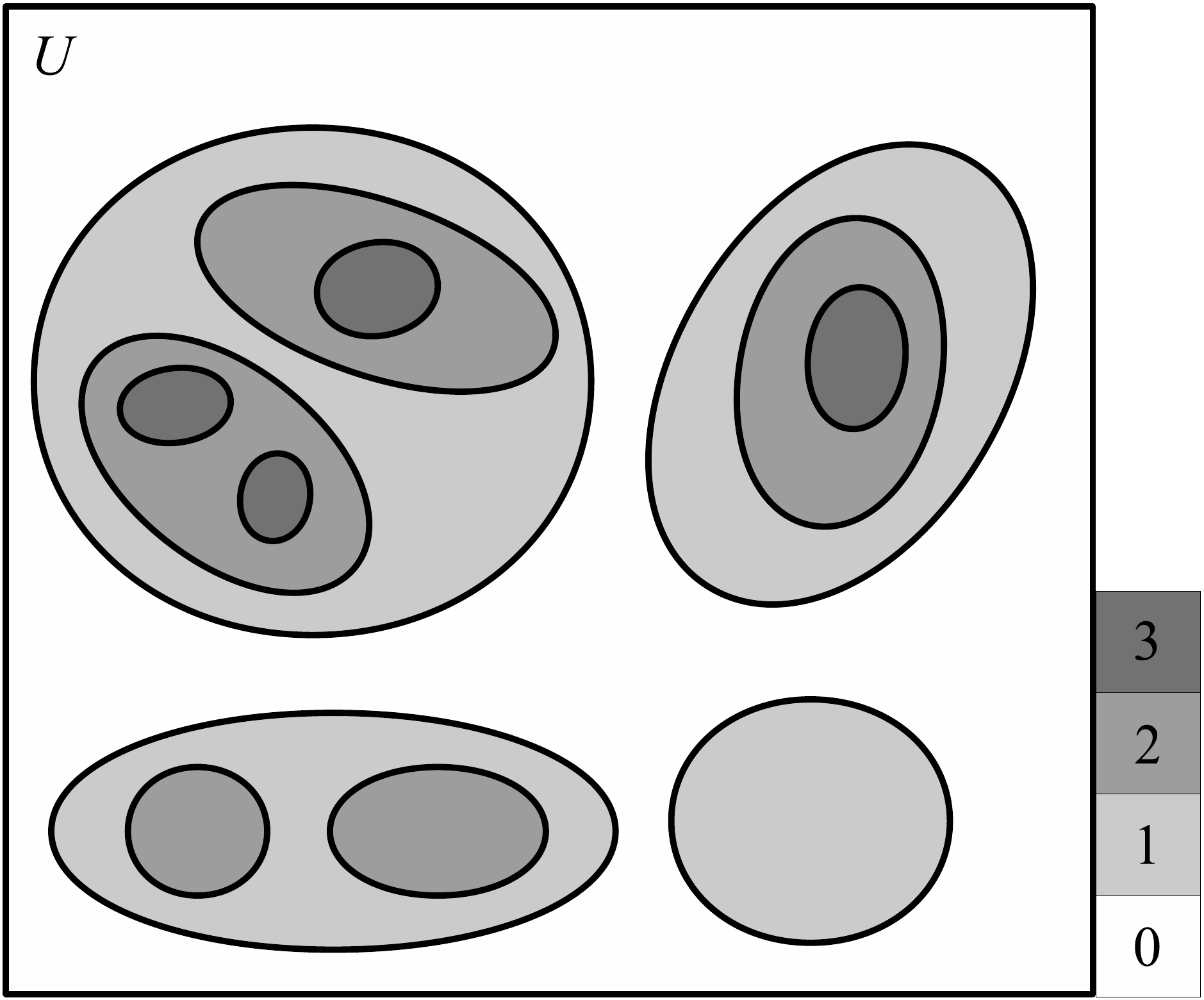} \label{fig standard}}
\qquad\subfloat[Canonical height function]{
\includegraphics[width=0.4\textwidth]{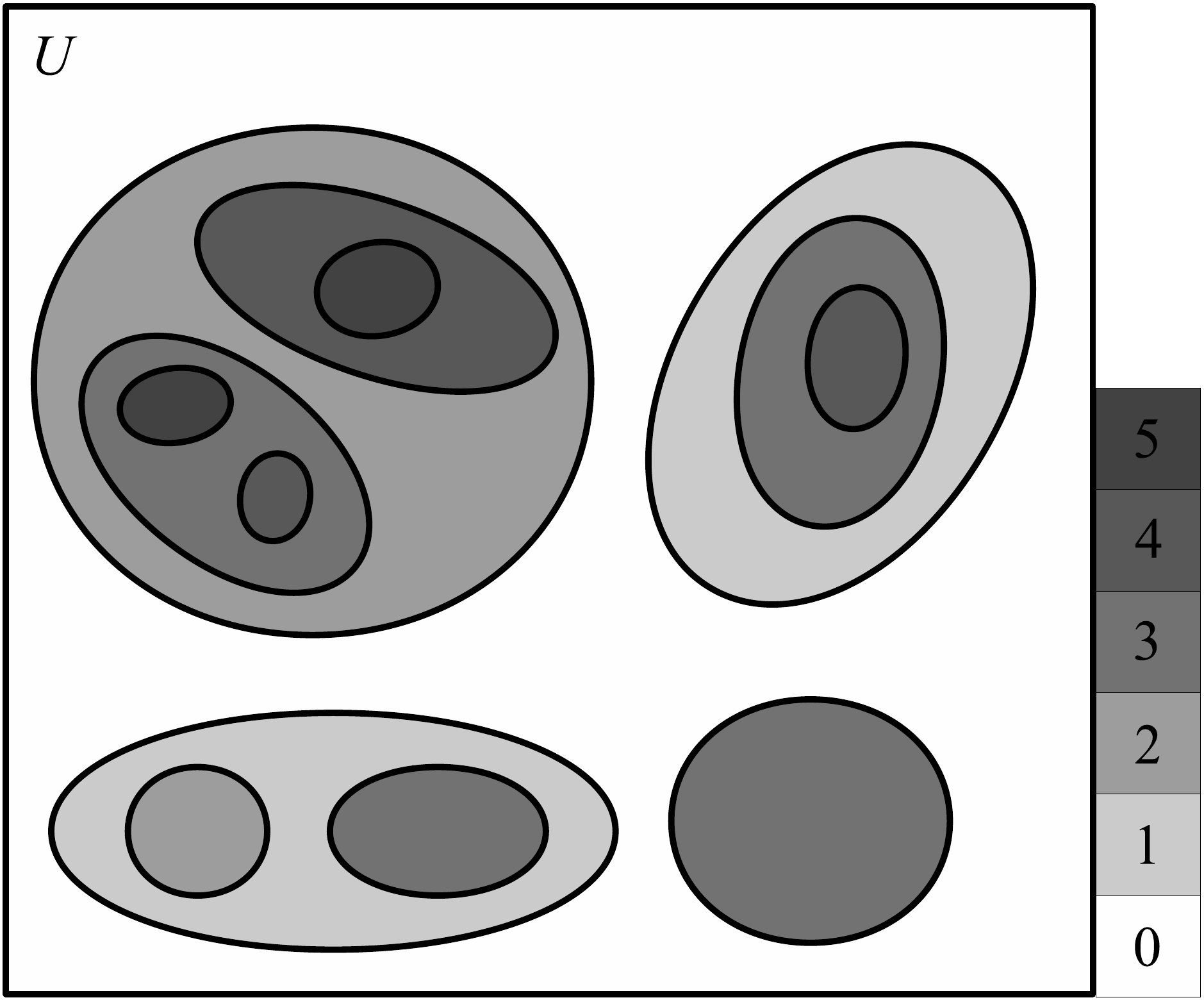}
\label{fig kanonikus}}\caption{A $\operatorname{CD}$-independent family with
two different height functions}%
\label{fig standardeskanonikus}%
\end{figure}

The next example shows that there exist $\operatorname{CD}$-independent
systems of pre-islands for which the standard height function is not the right
choice. However, in Section~\ref{section strict} we will see that for a wide
class of island domains, including those corresponding to graphs (cf.
Example~\ref{example graphs}), the standard height function is always appropriate.

\begin{example}
\label{example not standard}Let $U=\left\{  a,b,c,d\right\}  $, $\mathcal{C}%
=\left\{  A,B,U\right\}  $ and $\mathcal{K}=\left\{  A,B,U,K\right\}  $, where
$A=\left\{  a\right\}  $, $B=\left\{  b,c\right\}  $ and $K=\left\{
a,c\right\}  $. Then the family $\mathcal{H}=\left\{  A,B,U\right\}  $ is
admissible; the corresponding partition is $\mathcal{H}^{\left(  0\right)
}=\left\{  U\right\}  $, $\mathcal{H}^{\left(  1\right)  }=\left\{  B\right\}
$, $\mathcal{H}^{\left(  2\right)  }=\left\{  A\right\}  $, and the canonical
height function is given by $h_{\mathcal{H}}\left(  a\right)  =2$,
$h_{\mathcal{H}}\left(  b\right)  =h_{\mathcal{H}}\left(  c\right)  =1$,
$h_{\mathcal{H}}\left(  d\right)  =0$. It is straightforward to verify that
$\mathcal{H}$ is the system of pre-islands corresponding to $\left(
\mathcal{C},\mathcal{K},h_{\mathcal{H}}\right)  $. However, the standard
height function assigns the value $1$ to $a$, and thus $A$ is not a pre-island
with respect to the standard height function of $\mathcal{H}$.
\end{example}

\begin{proposition}
\label{prop HA==>SZ resze}If $\mathcal{H\subseteq C}$ is an admissible family
of sets and $h_{\mathcal{H}}$ is the corresponding canonical height function,
then every member of $\mathcal{H}$ is a pre-island with respect to $\left(
\mathcal{C},\mathcal{K},h_{\mathcal{H}}\right)  $.
\end{proposition}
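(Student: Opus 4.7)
The plan is to verify directly that each $H\in\mathcal{H}$ satisfies the defining inequality of a pre-island, using the level $j$ assigned to $H$ by Definition~\ref{def canonical height function}, i.e.\ the unique index with $H\in\mathcal{H}^{(j)}$. By admissibility (cf.\ Remark~\ref{rem admissible not just for antichains}) the partition $\mathcal{H}=\mathcal{H}^{(0)}\cup\cdots\cup\mathcal{H}^{(r)}$ exists, so $j$ is well defined. If $j=0$ then $H=U$, and no $K\in\mathcal{K}$ satisfies $H\prec K$, so the pre-island condition is vacuous. From now on I take $j\geq 1$.

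The first step is to establish the lower bound $\min h_{\mathcal{H}}(H)\geq j$: every $x\in H$ lies in $\bigcup\mathcal{H}^{(j)}$ because $H\in\mathcal{H}^{(j)}$, so by~(\ref{eq def canonical height function: height}) we have $h_{\mathcal{H}}(x)\geq j$. The second step is to extract a low-height point from each $K\in\mathcal{K}$ with $H\prec K$. The defining property~(\ref{eq def canonical height function: set}) of $\mathcal{H}^{(j)}$, which $H$ satisfies, gives
\[
K\not\subseteq\bigcup\bigl(\mathcal{H}\setminus(\mathcal{H}^{(0)}\cup\cdots\cup\mathcal{H}^{(j-1)})\bigr)=\bigcup_{i\geq j}\,\bigcup\mathcal{H}^{(i)},
\]
so there exists $x\in K$ that lies in none of the sets $\bigcup\mathcal{H}^{(i)}$ with $i\geq j$. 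By~(\ref{eq def canonical height function: height}) this forces $h_{\mathcal{H}}(x)\leq j-1$. Combining the bounds yields
\[
\min h_{\mathcal{H}}(K)\leq h_{\mathcal{H}}(x)\leq j-1<j\leq \min h_{\mathcal{H}}(H),
\]
which is exactly the pre-island inequality for $H$ against $K$.

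There is no substantial obstacle: the recursive definition of $\mathcal{H}^{(j)}$ is tailor-made so that every strict superset $K$ of a level-$j$ set must reach outside the union of all levels $\geq j$, and such a witness point automatically has canonical height strictly below $j$. The only minor technicality is the convention that $h_{\mathcal{H}}(x)=0$ when the maximum in~(\ref{eq def canonical height function: height}) is taken over the empty set (that is, when $x$ lies in none of the $\mathcal{H}^{(i)}$ with $i\geq 1$), but this is harmless because we only need $h_{\mathcal{H}}(x)<j$ and we already have $j\geq 1$.
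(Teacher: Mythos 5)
Your proof is correct and follows the same route as the paper: the paper records the key bound $\min h_{\mathcal{H}}(H)\geq i$ for $H\in\mathcal{H}^{\left(i\right)}$ and declares the rest straightforward, while you supply exactly the missing verification, namely that property (\ref{eq def canonical height function: set}) forces every $K\supset H$ to contain a point outside $\bigcup\bigl(\mathcal{H}^{\left(i\right)}\cup\cdots\cup\mathcal{H}^{\left(r\right)}\bigr)$, whose canonical height is then at most $i-1$. Your handling of the $H=U$ case and of the empty-maximum convention are sensible completions of details the paper leaves implicit.
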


\begin{proof}
Let $\mathcal{H\subseteq C}$ be admissible, and let us consider the partition
$\mathcal{H}=\mathcal{H}^{\left(  0\right)  }\cup\cdots\cup\mathcal{H}%
^{\left(  r\right)  }$ given in Definition~\ref{def canonical height function}%
. For each $H\in\mathcal{H}$, there is a unique $i\in\left\{  1,\ldots
,r\right\}  $ such that $H\in\mathcal{H}^{\left(  i\right)  }$, and we have
$\min h_{\mathcal{H}}\left(  H\right)  \geq i$ by
(\ref{eq def canonical height function: height}). Using this observation it is
straightforward to verify that $H$ is indeed a pre-island with respect to
$\left(  \mathcal{C},\mathcal{K},h_{\mathcal{H}}\right)  $.
\end{proof}

As an immediate consequence of Propositions~\ref{prop SZ==>HA} and
\ref{prop HA==>SZ resze} we have the following corollary.

\begin{corollary}
\label{cor maximal HA <==> maximal SZ}A subfamily of $\mathcal{C}$ is a
maximal system of pre-islands if and only if it is a maximal admissible family.
\end{corollary}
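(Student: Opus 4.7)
The plan is to combine the two preceding propositions, which already give the two inclusions needed to match ``systems of pre-islands'' and ``admissible families'' at the level of maximal elements of the respective posets (under $\subseteq$). The key observation is that both properties are monotone in the right way: being a system of pre-islands is not hereditary, but Proposition~\ref{prop SZ==>HA} says every such system is admissible, and Proposition~\ref{prop HA==>SZ resze} says every admissible family is contained in some system of pre-islands (namely the one determined by $h_{\mathcal{H}}$). These two facts interlock to force the two maximalities to coincide.

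For the forward direction, I would take $\mathcal{S}$ to be a maximal system of pre-islands and show it is a maximal admissible family. By Proposition~\ref{prop SZ==>HA}, $\mathcal{S}$ itself is admissible. Suppose $\mathcal{S}\subseteq\mathcal{H}$ for some admissible family $\mathcal{H}\subseteq\mathcal{C}$. Applying Proposition~\ref{prop HA==>SZ resze} to $\mathcal{H}$ produces a height function $h_{\mathcal{H}}$ whose corresponding system of pre-islands $\mathcal{S}'$ satisfies $\mathcal{H}\subseteq\mathcal{S}'$. Then $\mathcal{S}\subseteq\mathcal{S}'$ with both sides being systems of pre-islands, so maximality of $\mathcal{S}$ gives $\mathcal{S}=\mathcal{S}'$, and consequently $\mathcal{S}=\mathcal{H}$.

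For the backward direction, let $\mathcal{H}$ be a maximal admissible family. Proposition~\ref{prop HA==>SZ resze} again gives a height function $h_{\mathcal{H}}$ and a system of pre-islands $\mathcal{S}$ corresponding to $(\mathcal{C},\mathcal{K},h_{\mathcal{H}})$ with $\mathcal{H}\subseteq\mathcal{S}$. By Proposition~\ref{prop SZ==>HA}, $\mathcal{S}$ is admissible, so maximality of $\mathcal{H}$ forces $\mathcal{H}=\mathcal{S}$; in particular $\mathcal{H}$ is itself a system of pre-islands. If now $\mathcal{H}\subseteq\mathcal{S}''$ for any system of pre-islands $\mathcal{S}''$, then $\mathcal{S}''$ is admissible (again by Proposition~\ref{prop SZ==>HA}), and maximality of $\mathcal{H}$ among admissible families yields $\mathcal{H}=\mathcal{S}''$.

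There is no real obstacle here: the proof is essentially bookkeeping between the two propositions, and the corollary is labelled ``immediate'' in the text for precisely this reason. The only mildly delicate point is remembering that $\mathcal{H}\subseteq\mathcal{S}'$ (rather than equality) is all that Proposition~\ref{prop HA==>SZ resze} guarantees, so one must invoke the maximality hypothesis on the appropriate side in each direction to upgrade this to the required equality.
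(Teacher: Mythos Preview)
Your proof is correct and is exactly the argument the paper has in mind: the corollary is stated as an immediate consequence of Propositions~\ref{prop SZ==>HA} and \ref{prop HA==>SZ resze}, with no further proof given, and your write-up spells out precisely the back-and-forth between these two propositions that makes both implications go through.
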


We have seen in Example~\ref{example not CD} that it is possible that a subset
of a system of pre-islands is not a system of pre-islands. The notion of
admissibility allows us to describe those situations where this cannot happen.

\begin{proposition}
\label{prop subsystem}The following two conditions are equivalent for any
island domain $\left(  \mathcal{C},\mathcal{K}\right)  $:%
\renewcommand{\theenumi}{(\roman{enumi})}
\renewcommand{\labelenumi}{\theenumi}%

\begin{enumerate}
\item \label{prop subsystem (i)}Any subset of a system of pre-islands
corresponding to $\left(  \mathcal{C},\mathcal{K}\right)  $ that contains $U$
is also a system of pre-islands.

\item \label{prop subsystem (ii)}The systems of pre-islands corresponding to
$\left(  \mathcal{C},\mathcal{K}\right)  $ are exactly the admissible families.
\end{enumerate}
\end{proposition}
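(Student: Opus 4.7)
The plan is to deduce both implications almost entirely from Propositions~\ref{prop SZ==>HA} and~\ref{prop HA==>SZ resze}, together with the easy observation (noted right after Definition~\ref{def admissible}) that any subfamily of an admissible family containing $U$ is again admissible. There is no real combinatorial obstacle here; the only thing to be careful about is quantifier juggling: condition \ref{prop subsystem (i)} asserts existence of a witnessing height function for every such subset, and we must actually produce one.

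For the direction \ref{prop subsystem (ii)}$\Rightarrow$\ref{prop subsystem (i)}, I would start with a system of pre-islands $\mathcal{S}$ and any subfamily $\mathcal{T}\subseteq\mathcal{S}$ with $U\in\mathcal{T}$. Proposition~\ref{prop SZ==>HA} gives that $\mathcal{S}$ is admissible, and then the subfamily remark shows $\mathcal{T}$ is admissible too. Invoking \ref{prop subsystem (ii)} immediately turns $\mathcal{T}$ into a system of pre-islands.

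For \ref{prop subsystem (i)}$\Rightarrow$\ref{prop subsystem (ii)}, one inclusion is already in hand: every system of pre-islands is admissible by Proposition~\ref{prop SZ==>HA}. For the other, let $\mathcal{H}\subseteq\mathcal{C}$ be admissible; in particular $U\in\mathcal{H}$. Apply Proposition~\ref{prop HA==>SZ resze} to the canonical height function $h_{\mathcal{H}}$: every member of $\mathcal{H}$ is a pre-island with respect to $(\mathcal{C},\mathcal{K},h_{\mathcal{H}})$. Hence the system of pre-islands $\mathcal{S}$ corresponding to $(\mathcal{C},\mathcal{K},h_{\mathcal{H}})$ satisfies $\mathcal{H}\subseteq\mathcal{S}$, and of course $U\in\mathcal{H}$. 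By hypothesis \ref{prop subsystem (i)} applied to the subfamily $\mathcal{H}$ of $\mathcal{S}$, there exists a height function whose system of pre-islands is exactly $\mathcal{H}$, so $\mathcal{H}$ is a system of pre-islands. This gives \ref{prop subsystem (ii)}, completing the equivalence.

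In short, the proof is a two-line assembly: Proposition~\ref{prop SZ==>HA} handles ``system of pre-islands $\Rightarrow$ admissible'' in general, Proposition~\ref{prop HA==>SZ resze} provides an enveloping system of pre-islands from any admissible family, and \ref{prop subsystem (i)} is precisely the closure-under-containing-subsets condition needed to trim that envelope back down to $\mathcal{H}$.
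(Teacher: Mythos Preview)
Your proof is correct and follows essentially the same approach as the paper: both directions are derived from Propositions~\ref{prop SZ==>HA} and~\ref{prop HA==>SZ resze} together with the observation that subfamilies (containing $U$) of admissible families are admissible. The only cosmetic difference is that for \ref{prop subsystem (ii)}$\Rightarrow$\ref{prop subsystem (i)} the paper invokes \ref{prop subsystem (ii)} directly to get that $\mathcal{S}$ is admissible, whereas you cite Proposition~\ref{prop SZ==>HA}; either works.
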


\begin{proof}
The implication \ref{prop subsystem (ii)}$\implies$\ref{prop subsystem (i)}
follows from the simple observation that any subset of an admissible family
containing $U$ is also admissible. Assume now that \ref{prop subsystem (i)}
holds. In view of Proposition~\ref{prop SZ==>HA}, it suffices to prove that
every admissible family is a system of pre-islands. Let $\mathcal{H}$ be an
admissible family, then Proposition~\ref{prop HA==>SZ resze} yields a system
of pre-islands containing $\mathcal{H}$. Using \ref{prop subsystem (i)} we can
conclude that $\mathcal{H}$ is a system of pre-islands.
\end{proof}

\section{$\operatorname{CD}$-independence and connective island
domains\label{section CD and ID}}

As we have seen in Example~\ref{example not CD}, a system of pre-islands is
not necessarily $\operatorname{CD}$-independent. In this section we present a
condition that characterizes those island domains $\left(  \mathcal{C}%
,\mathcal{K}\right)  $ whose systems of pre-islands are $\operatorname{CD}%
$-independent, and we will prove that admissibility is necessary and
sufficient for being a systems of pre-islands in this case.

\begin{definition}
\label{def ID}An island domain $\left(  \mathcal{C},\mathcal{K}\right)  $ is a
\emph{connective island domain} if%
\begin{equation}
\forall A,B\in\mathcal{C}:~\left(  A\cap B\neq\emptyset\text{ and }B\nsubseteq
A\right)  \implies\exists K\in\mathcal{K}:A\subset K\subseteq A\cup B.
\label{eq def ID}%
\end{equation}

\end{definition}

\begin{remark}
\label{rem ID symmetry}Observe that if $A\subset B$, then (\ref{eq def ID}) is
satisfied with $K=B$. Thus it suffices to require (\ref{eq def ID}) for sets
$A,B$ that are not comparable or disjoint. In this case, by switching the role
of $A$ and $B$, we obtain that there is also a set $K^{\prime}\in\mathcal{K}$
such that $B\subset K^{\prime}\subseteq A\cup B$ (see
Figure~\ref{fig islanddomain}).
\end{remark}

%

\begin{figure}
[tb]
\begin{center}
\includegraphics[
natheight=4.002400in,
natwidth=6.353600in,
height=4.0958cm,
width=6.4617cm
]%
{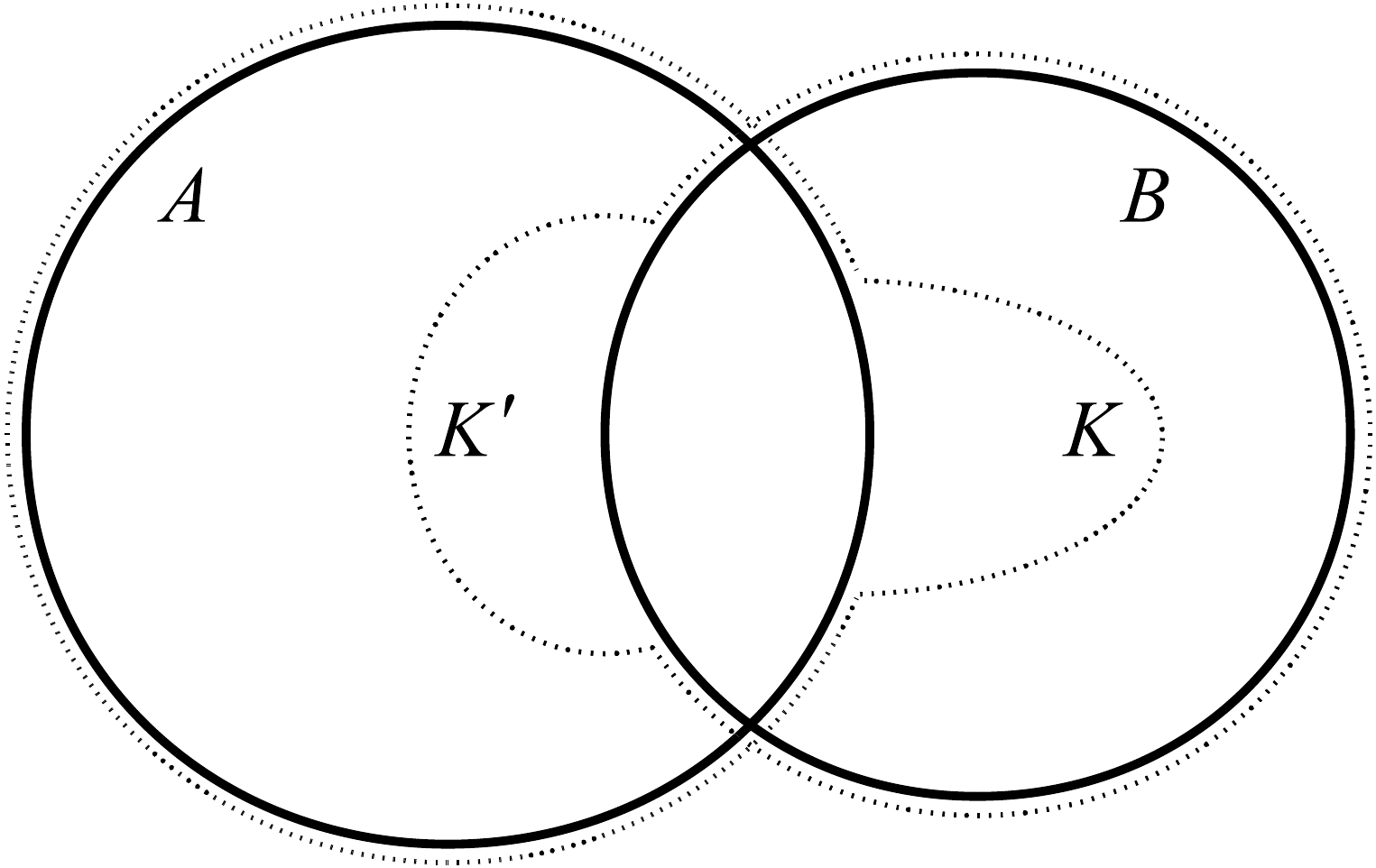}%
\caption{Illustration to the definition of an island domain}%
\label{fig islanddomain}%
\end{center}
\end{figure}

\begin{remark}
The terminology is motiveted by the intuition that the set $K$ in
Definition~\ref{def ID} somehow connects $A$ and $B$. Let us note that if
$\left(  \mathcal{C},\mathcal{K}\right)  $ corresponds to a graph, as in
Example~\ref{example graphs}, then $\left(  \mathcal{C},\mathcal{K}\right)  $
is a connective island domain. Furthermore, it is not difficult to prove that
if $\left(  \mathcal{C},\mathcal{K}\right)  $ is a connective island domain
with $\mathcal{C}=\mathcal{K}$, then (\ref{eq def ID}) is equivalent to the
fact that the union of two overlapping members of $\mathcal{K}$ belongs to
$\mathcal{K}$ (see (\ref{eq KK}) in\ Section~\ref{section strict}), which is
an important property of connected sets.
\end{remark}

We will prove that pre-islands corresponding to connective island domains are
not only $\operatorname{CD}$-independent, but they also satisfy the following
stronger independence condition, usually called $\operatorname*{CDW}%
$-independence, which was introduced in \cite{CzSch}.

\begin{definition}
\label{def CDW-independence}A family $\mathcal{H}\subseteq\mathcal{P}\left(
U\right)  $ is \emph{weakly independent} (see \cite{CzHSch}) if
\begin{equation}
H\subseteq\bigcup_{i\in I}H_{i}\implies\exists i\in I:H\subseteq H_{i}
\label{eq def CDW-independence}%
\end{equation}
holds for all $H\in\mathcal{H},H_{i}\in\mathcal{H}\left(  i\in I\right)  $. If
$\mathcal{H}$ is both $\operatorname{CD}$-independent and weakly independent,
then we say that $\mathcal{H}$ is $\operatorname*{CDW}$\emph{-independent}.
\end{definition}

\begin{remark}
\label{rem CD maximal subsets}Let $\mathcal{H}\subseteq\mathcal{P}\left(
U\right)  $ be a $\operatorname{CD}$-independent family, and let
$H\in\mathcal{H}$. Let $M_{1},\ldots,M_{m}$ be those elements of $\mathcal{H}$
that are properly contained in $H$ and are maximal with respect to this
property. Then $M_{1},\ldots,M_{m}$ are pairwise disjoint, and $M_{1}%
\cup\cdots\cup M_{m}\subseteq H$. Weak independence of $\mathcal{H}$ is
equivalent to the fact that this latter containment is strict for every
$H\in\mathcal{H}$. In particular, in the definition of weak independence it
suffices to require (\ref{eq def CDW-independence}) for pairwise disjoint sets
$H_{i}$.
\end{remark}

\begin{lemma}
\label{lemma ID ==> (HA==>CDW)}If $\left(  \mathcal{C},\mathcal{K}\right)  $
is a connective island domain, then every admissible subfamily of
$\mathcal{C}$ is $\operatorname*{CDW}$-independent.
\end{lemma}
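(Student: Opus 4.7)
The plan is to establish the two components of $\operatorname{CDW}$-independence separately, both by reducing to the admissibility of a cleverly chosen antichain and then producing a witness $K \in \mathcal{K}$ that violates condition~(\ref{eq admissible}) via the connective island domain property~(\ref{eq def ID}).

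First I would handle $\operatorname{CD}$-independence. Given $A, B \in \mathcal{H}$ with $A \cap B \neq \emptyset$, $A \nsubseteq B$ and $B \nsubseteq A$, the set $\mathcal{A} = \{A, B\}$ is a nonempty antichain in $\mathcal{H}$, and $\bigcup\mathcal{A} = A \cup B$. Applying~(\ref{eq def ID}) once with the pair $(A,B)$ and once (by the symmetry noted in Remark~\ref{rem ID symmetry}) with the pair $(B,A)$ yields sets $K, K' \in \mathcal{K}$ with $A \subset K \subseteq A \cup B$ and $B \subset K' \subseteq A \cup B$. Hence neither choice of $H \in \mathcal{A}$ can satisfy~(\ref{eq admissible}), contradicting admissibility.

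Next I would derive weak independence from $\operatorname{CD}$-independence. Using Remark~\ref{rem CD maximal subsets}, it suffices to show that for every $H \in \mathcal{H}$, the union of the maximal proper members of $\mathcal{H}$ below $H$ is a strict subset of $H$. Assume for contradiction that $H = M_1 \cup \cdots \cup M_m$, where $M_1, \ldots, M_m \in \mathcal{H}$ are the maximal members of $\mathcal{H}$ properly contained in $H$; by $\operatorname{CD}$-independence they are pairwise disjoint, and clearly $m \geq 2$. Set $\mathcal{A} = \{M_1, \ldots, M_m\}$, a nonempty antichain. For each $j$, apply~(\ref{eq def ID}) to the pair $(M_j, H)$: since $M_j \cap H = M_j \neq \emptyset$ and $H \nsubseteq M_j$, there exists $K_j \in \mathcal{K}$ with $M_j \subset K_j \subseteq M_j \cup H = H = \bigcup\mathcal{A}$. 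Thus no $M_j \in \mathcal{A}$ can serve as the witness required by~(\ref{eq admissible}), contradicting the admissibility of $\mathcal{H}$.

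The main subtlety is recognising that the correct antichain for the weak independence step is the family of maximal proper sub-members of $H$ (not, say, the $H_i$'s from an arbitrary covering $H \subseteq \bigcup H_i$): this is precisely what makes $\bigcup \mathcal{A} = H$, so that the set $K_j$ produced by~(\ref{eq def ID}) is automatically contained in $\bigcup \mathcal{A}$. Once this reformulation (supplied by Remark~\ref{rem CD maximal subsets}) is in hand, the connective island domain property matches the admissibility condition almost tautologically.
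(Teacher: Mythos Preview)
Your proof is correct and follows essentially the same strategy as the paper's: the same antichains $\{A,B\}$ and $\{M_1,\ldots,M_m\}$ are used, with the same contradictions to~(\ref{eq admissible}).

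One minor simplification worth noting: in the weak-independence step the paper does not invoke~(\ref{eq def ID}) at all. Since $H\in\mathcal{C}\subseteq\mathcal{K}$, the set $H$ itself already serves as the witnessing $K$ for every $M_j$ (namely $M_j\subset H$ and $H\subseteq\bigcup\mathcal{A}$). Thus the connective-island-domain hypothesis is needed only for the $\operatorname{CD}$-independence half, whereas your argument applies it in both halves. Either route is valid.
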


\begin{proof}
Let $\left(  \mathcal{C},\mathcal{K}\right)  $ be a connective island domain,
and let $\mathcal{H}\subseteq\mathcal{C}$ be an admissible family. If
$A,B\in\mathcal{H}$ are neither comparable nor disjoint, then (\ref{eq def ID}%
) and Remark~\ref{rem ID symmetry} show that $\mathcal{A}:=\left\{
A,B\right\}  $ is an antichain for which (\ref{eq admissible}) does not hold
(see Figure~\ref{fig islanddomain}). Thus $\mathcal{H}$ is $\operatorname{CD}$-independent.

To prove that $\mathcal{H}$ is also $\operatorname*{CDW}$-independent, we
apply Remark~\ref{rem CD maximal subsets}. Let us assume for contradiction
that $M_{1}\cup\cdots\cup M_{m}=H$ for pairwise disjoint sets $M_{1}%
,\ldots,M_{m}\in\mathcal{H}\left(  m\geq2\right)  $ and $H\in\mathcal{H}$.
Since $M_{i}\subset H\in\mathcal{K}$ and $H\subseteq M_{1}\cup\cdots\cup
M_{m}$ for $i=1,\ldots,m$, we see that (\ref{eq admissible}) fails for the
antichain $\mathcal{A}:=\left\{  M_{1},\ldots,M_{m}\right\}  $, contradicting
the admissibility of $\mathcal{H}$.
\end{proof}

As the next example shows, a $\operatorname*{CDW}$-independent family in a
connective island domain is not necessarily admissible.

\begin{example}
\label{example CDNT but not HA}Let us consider the same sets $U$, $A$, $B$ and
$K$ as in Example$~$\ref{example not standard}, and let $\mathcal{C}=\left\{
A,B,U\right\}  $ and $\mathcal{K}=\left\{  A,B,U,K,L\right\}  $, where
$L=\left\{  a,b,c\right\}  $. Then $\left(  \mathcal{C},\mathcal{K}\right)  $
is a connective island domain and $\left\{  A,B,U\right\}  $ is
$\operatorname*{CDW}$-independent, but it is not admissible (hence not a
system of pre-islands).
\end{example}

\begin{theorem}
\label{thm ID <==> (SZ==>CD)}The following three conditions are equivalent for
any island domain $\left(  \mathcal{C},\mathcal{K}\right)  $:%
\renewcommand{\theenumi}{(\roman{enumi})}
\renewcommand{\labelenumi}{\theenumi}%

\begin{enumerate}
\item \label{thm ID <==> (SZ==>CD) (i)}$\left(  \mathcal{C},\mathcal{K}%
\right)  $ is a connective island domain.

\item \label{thm ID <==> (SZ==>CD) (ii)}Every system of pre-islands
corresponding to $\left(  \mathcal{C},\mathcal{K}\right)  $ is
$\operatorname{CD}$-independent.

\item \label{thm ID <==> (SZ==>CD) (iii)}Every system of pre-islands
corresponding to $\left(  \mathcal{C},\mathcal{K}\right)  $ is
$\operatorname*{CDW}$-independent.
\end{enumerate}
\end{theorem}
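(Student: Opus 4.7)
The plan is to establish the cyclic chain of implications (i) $\Rightarrow$ (iii) $\Rightarrow$ (ii) $\Rightarrow$ (i). The middle implication is immediate, since $\operatorname{CDW}$-independence entails $\operatorname{CD}$-independence by definition. For (i) $\Rightarrow$ (iii) I would simply concatenate two already-established results: Proposition~\ref{prop SZ==>HA} asserts that every system of pre-islands is admissible, and Lemma~\ref{lemma ID ==> (HA==>CDW)} asserts that in a connective island domain every admissible subfamily of $\mathcal{C}$ is $\operatorname{CDW}$-independent. Composing these yields (iii).

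The substantive direction is (ii) $\Rightarrow$ (i), which I would prove by contrapositive. Assume $\left(\mathcal{C},\mathcal{K}\right)$ is not a connective island domain, so there exist $A,B\in\mathcal{C}$ with $A\cap B\neq\emptyset$, $B\not\subseteq A$, and no $K\in\mathcal{K}$ satisfying $A\subset K\subseteq A\cup B$. Two preliminary observations clean up the setup: first, $A\not\subseteq B$ (else $K=B$ would witness the failed condition), so $A$ and $B$ are incomparable and overlap; second, $U\neq A\cup B$ (else $K=U\in\mathcal{K}$ would witness it), so $U\setminus(A\cup B)$ is nonempty. The goal is then to exhibit a single height function $h$ making both $A$ and $B$ pre-islands; since the resulting system of pre-islands would contain the incomparable overlapping pair $\{A,B\}$, it cannot be $\operatorname{CD}$-independent, contradicting (ii).

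Concretely, I would define $h\colon U\to\mathbb{R}$ to take the value $2$ on $B$, $1$ on $A\setminus B$, and $0$ on $U\setminus(A\cup B)$, so that $\min h(A)=1$ and $\min h(B)=2$. That $A$ is a pre-island is immediate: for every $K\in\mathcal{K}$ with $A\prec K$, the failure hypothesis forces $K\not\subseteq A\cup B$, so $K$ contains a point of height $0<1$. The verification for $B$ needs a small case split: if $K\in\mathcal{K}$ with $B\prec K$ satisfies $K\subseteq A\cup B$, then $K\setminus B\subseteq A\setminus B$ is nonempty, so $K$ contains a point of height $1<2$; otherwise $K$ contains a point of height $0<2$. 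The main obstacle---and the reason a symmetric height function such as ``$1$ on $A\cup B$, $0$ elsewhere'' would not suffice---is that the failure of the connective condition can be one-sided: some $K'\in\mathcal{K}$ with $B\subset K'\subseteq A\cup B$ may still exist. Assigning $B$ a strictly larger value than $A\setminus B$ is precisely what defuses such a $K'$, by forcing it to carry a point of smaller height from $A\setminus B$.
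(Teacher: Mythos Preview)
Your proof is correct and follows essentially the same route as the paper: the same cycle (i)$\Rightarrow$(iii)$\Rightarrow$(ii)$\Rightarrow$(i), the same appeal to Proposition~\ref{prop SZ==>HA} and Lemma~\ref{lemma ID ==> (HA==>CDW)} for (i)$\Rightarrow$(iii), and the same height function $h=2\cdot\mathbf{1}_B+1\cdot\mathbf{1}_{A\setminus B}$ for the contrapositive of (ii)$\Rightarrow$(i). Your added preliminary observations (that $A\not\subseteq B$ and $U\neq A\cup B$) and the case split for $B$ merely make explicit what the paper leaves implicit via Remark~\ref{rem ID symmetry} and the one-line verification that $\min h(K)\leq 1$ for any proper superset $K$ of $B$.
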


\begin{proof}
It is obvious that \ref{thm ID <==> (SZ==>CD) (iii)}$\implies$%
\ref{thm ID <==> (SZ==>CD) (ii)}.

To prove that \ref{thm ID <==> (SZ==>CD) (ii)}$\implies$%
\ref{thm ID <==> (SZ==>CD) (i)}, let us assume that $\left(  \mathcal{C}%
,\mathcal{K}\right)  $ is not a connective island domain. Then there exist
$A,B\in\mathcal{C}$ that are not comparable or disjoint such that there is no
$K\in\mathcal{K}$ with $A\subset K\subseteq A\cup B$. We define a height
function $h\colon U\rightarrow\mathbb{N}$ as follows:%
\[
h\left(  x\right)  :=\left\{  \!\!%
\begin{array}
[c]{rl}%
2, & \text{if }x\in B;\\
1, & \text{if }x\in A\setminus B;\\
0, & \text{if }x\notin A\cup B\text{.}%
\end{array}
\right.
\]
We claim that both $A$ and $B$ are pre-islands with respect to $\left(
\mathcal{C},\mathcal{K},h\right)  $. This is clear for $B$, as $\min h\left(
K\right)  \leq1$ for any proper superset $K$ of $B$. On the other hand, our
assumption implies that for any $K\supset A$ we have $K\nsubseteq A\cup B$,
hence $\min h\left(  K\right)  =0<\min h\left(  A\right)  =1$, thus $A$ is
indeed a pre-island. Since $A$ and $B$ are not $\operatorname{CD}$, the system
of pre-islands corresponding to $\left(  \mathcal{C},\mathcal{K},h\right)  $
is not $\operatorname{CD}$-independent.

Finally, for the implication \ref{thm ID <==> (SZ==>CD) (i)}$\implies
$\ref{thm ID <==> (SZ==>CD) (iii)}, assume that $\left(  \mathcal{C}%
,\mathcal{K}\right)  $ is a connective island domain and $\mathcal{S}$ is a
system of pre-islands corresponding to $\left(  \mathcal{C},\mathcal{K}%
\right)  $. By Proposition~\ref{prop SZ==>HA}, $\mathcal{S}$ is admissible,
and then Lemma~\ref{lemma ID ==> (HA==>CDW)} shows that $\mathcal{S}$ is
$\operatorname*{CDW}$-independent.
\end{proof}

Our final goal in this section is to prove that if $\left(  \mathcal{C}%
,\mathcal{K}\right)  $ is a connective island domain, then the systems of
pre-islands are exactly the admissible subfamilies of $\mathcal{C}$. Recall
that this is not true in general if $\left(  \mathcal{C},\mathcal{K}\right)  $
is not a connective island domain (see Example~\ref{example not CD}), but the
two notions coincide for maximal families
(Corollary~\ref{cor maximal HA <==> maximal SZ}).

\begin{theorem}
\label{thm ID ==> (HA<==>SZ)}If $\left(  \mathcal{C},\mathcal{K}\right)  $ is
a connective island domain, then a subfamily of $\mathcal{C}$ is a system of
pre-islands if and only if it is admissible.
\end{theorem}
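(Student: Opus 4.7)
The forward implication ``system of pre-islands $\Rightarrow$ admissible'' is already Proposition~\ref{prop SZ==>HA} and makes no use of the connectivity hypothesis. My plan for the converse is to show that the canonical height function $h_{\mathcal{H}}$ of Definition~\ref{def canonical height function} realizes an admissible family $\mathcal{H}\subseteq\mathcal{C}$ \emph{exactly} as its system of pre-islands. Proposition~\ref{prop HA==>SZ resze} already gives $\mathcal{H}\subseteq\mathcal{S}$, where $\mathcal{S}$ denotes the system of pre-islands of $(\mathcal{C},\mathcal{K},h_{\mathcal{H}})$, so the work is concentrated in the reverse inclusion $\mathcal{S}\subseteq\mathcal{H}$.

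The preparation is short: by Proposition~\ref{prop SZ==>HA} and Theorem~\ref{thm ID <==> (SZ==>CD)}, $\mathcal{S}$ is both admissible and $\operatorname{CD}$-independent (this is where connectivity enters). In particular, for every $S\in\mathcal{S}$ and every $M\in\mathcal{H}\subseteq\mathcal{S}$, the sets $S$ and $M$ are comparable or disjoint. I would then assume toward a contradiction that some $S\in\mathcal{S}\setminus\mathcal{H}$ exists and set $t=\min h_{\mathcal{H}}(S)$.

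The heart of the argument is to extract an antichain $\mathcal{A}\subseteq\mathcal{H}$ with $\bigcup\mathcal{A}=S$. By the definition of $h_{\mathcal{H}}$, every $y\in S$ lies in some $M\in\mathcal{H}^{(j)}$ with $j\geq t$. For any such $M$ meeting $S$, $\operatorname{CD}$-independence leaves only the alternatives $M\subseteq S$ or $S\subseteq M$. The sub-case $S\subsetneq M$ is ruled out because Remark~\ref{remark basic}(d) applied to the pre-island $S$ would force $\min h_{\mathcal{H}}(M)<t$, contradicting the fact that every point of $M$ belongs to $\bigcup\mathcal{H}^{(j)}$ for some $j\geq t$ and so has height at least $t$; and the sub-case $S=M$ is excluded by $S\notin\mathcal{H}$. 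Thus every such $M$ meeting $S$ satisfies $M\subsetneq S$, and taking $\mathcal{A}$ to be the nonempty family of $\subseteq$-maximal such $M$'s produces the desired antichain in $\mathcal{H}$ with $\bigcup\mathcal{A}=S$.

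The contradiction then falls out quickly. Admissibility of $\mathcal{H}$ applied to $\mathcal{A}$ guarantees some $H\in\mathcal{A}$ with $K\nsubseteq\bigcup\mathcal{A}=S$ for every $K\in\mathcal{K}$ properly containing $H$. But $K:=S$ itself lies in $\mathcal{C}\subseteq\mathcal{K}$, strictly contains $H$, and trivially satisfies $K\subseteq S$, a contradiction. The main obstacle I anticipate is the bookkeeping of the third paragraph: the extraction of $\mathcal{A}$ with $\bigcup\mathcal{A}=S$ depends essentially on $\operatorname{CD}$-independence of $\mathcal{S}$, which is precisely the step where the connective island domain hypothesis is consumed, so without connectivity the reduction to a failure of admissibility collapses (as Example~\ref{example not CD} already warned us).
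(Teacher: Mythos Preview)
Your argument is correct, and it takes a genuinely different route from the paper's.

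The paper argues as follows: given $S\in\mathcal{S}\setminus\mathcal{H}$, it locates the least $H\in\mathcal{H}$ containing $S$, lists the maximal members $M_1,\ldots,M_m$ of $\mathcal{H}$ properly contained in $H$, and uses the $\operatorname{CDW}$-independence of $\mathcal{S}$ (from Theorem~\ref{thm ID <==> (SZ==>CD)}) to show that $S$ meets the ``plateau'' $H\setminus(M_1\cup\cdots\cup M_m)$. Since $h_{\mathcal{H}}$ is constant on this plateau and equals $\min h_{\mathcal{H}}(H)$, one gets $\min h_{\mathcal{H}}(S)=\min h_{\mathcal{H}}(H)$, contradicting the pre-island condition for $S$ with $K=H$.

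Your route instead covers $S$ from \emph{below}: for each $y\in S$ you pick an $M\in\mathcal{H}^{(j)}$ with $j\ge t$ through $y$, use $\operatorname{CD}$-independence of $\mathcal{S}$ together with $\min h_{\mathcal{H}}(M)\ge j\ge t$ to force $M\subsetneq S$, and thus write $S$ as the union of an antichain $\mathcal{A}\subseteq\mathcal{H}$ of proper subsets. The contradiction then comes straight from the admissibility of $\mathcal{H}$ by plugging in $K=S\in\mathcal{C}\subseteq\mathcal{K}$. This is slightly more economical than the paper's proof: you never invoke weak independence (only $\operatorname{CD}$-independence of $\mathcal{S}$), you do not need Lemma~\ref{lemma ID ==> (HA==>CDW)} for $\mathcal{H}$, and you avoid the height-function bookkeeping on the plateau. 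On the other hand, the paper's analysis of the level structure of $h_{\mathcal{H}}$ paints a clearer picture of \emph{why} no extra pre-islands can appear, which is informative in its own right. One minor point worth making explicit in your write-up: $S\notin\mathcal{H}$ forces $S\neq U$, so $S$ is a proper pre-island and hence $t\ge 1$; this is what guarantees that each $y\in S$ actually lies in some $\mathcal{H}^{(j)}$ with $j\ge 1$.
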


\begin{proof}
We have already seen in Proposition~\ref{prop SZ==>HA} that every system of
pre-islands is admissible. Let us now assume that $\left(  \mathcal{C}%
,\mathcal{K}\right)  $ is a connective island domain and let $\mathcal{H}%
\subseteq\mathcal{C}$ be admissible. From\ Lemma~\ref{lemma ID ==> (HA==>CDW)}
it follows that $\mathcal{H}$ is $\operatorname*{CDW}$-independent. Let
$\mathcal{S}$ be the system of pre-islands corresponding to $\left(
\mathcal{C},\mathcal{K},h_{\mathcal{H}}\right)  $, where $h_{\mathcal{H}}$ is
the canonical height function of $\mathcal{H}$ (see
Definition~\ref{def canonical height function}). Then $\mathcal{S}$ is also
$\operatorname*{CDW}$-independent by Theorem~\ref{thm ID <==> (SZ==>CD)}. From
Proposition~\ref{prop HA==>SZ resze} it follows that $\mathcal{H}%
\subseteq\mathcal{S}$, and we are going to prove that we actually have
$\mathcal{H}=\mathcal{S}$.

Suppose for contradiction that there exists $S\in\mathcal{S}$ such that
$S\notin\mathcal{H}$. Since $\mathcal{H}$ is $\operatorname{CD}$-independent
and finite, the members of $\mathcal{H}$ that contain $S$ form a nonempty
finite chain. Denoting the least element of this chain by $H$, we have
$S\subset H$, as $S\notin\mathcal{H}$. Let $M_{1},\ldots,M_{m}$ denote those
elements of $\mathcal{H}$ that are properly contained in $H$ and are maximal
with respect to this property (if there are such sets). Clearly, $M_{1}%
,\ldots,M_{m}$ are pairwise disjoint, and $M_{1}\cup\cdots\cup M_{m}\subset
H$, since $\mathcal{H}$ is $\operatorname*{CDW}$-independent (see
Remark~\ref{rem CD maximal subsets}).

We claim that $S\nsubseteq M_{1}\cup\cdots\cup M_{m}$. Assuming on the
contrary that $S\subseteq M_{1}\cup\cdots\cup M_{m}$, the $\operatorname*{CDW}%
$-independence of $\mathcal{S}$ implies that there is an $i\in\left\{
1,\ldots,m\right\}  $ such that $S\subseteq M_{i}$. However, this contradicts
the minimality of $H$.

Any two elements of $H\setminus\left(  M_{1}\cup\cdots\cup M_{m}\right)  $ are
contained in exactly the same members of $\mathcal{H}$, therefore
$h_{\mathcal{H}}$ is constant, say constant $c$, on this set (see
Figure~\ref{fig proof}; cf. also Figure~\ref{fig kanonikus}). On the other
hand, if $x\in M_{1}\cup\cdots\cup M_{m}$, then clearly we have
$h_{\mathcal{H}}\left(  x\right)  \geq c$, hence $\min h_{\mathcal{H}}\left(
H\right)  =c$. Since $S$ is not covered by the sets $M_{i}$, it contains a
point $u$ from $H\setminus\left(  M_{1}\cup\cdots\cup M_{m}\right)  $,
therefore $\min h_{\mathcal{H}}\left(  S\right)  =h\left(  u\right)  =c$. Thus
we have $S\subset H\in\mathcal{K}$ and $\min h_{\mathcal{H}}\left(  S\right)
=\min h_{\mathcal{H}}\left(  H\right)  $, contradicting that $S$ is a
pre-island with respect to $\left(  \mathcal{C},\mathcal{K},h_{\mathcal{H}%
}\right)  $.
\end{proof}

%

\begin{figure}
[tb]
\begin{center}
\includegraphics[
natheight=5.446500in,
natwidth=7.871500in,
height=4.2514cm,
width=6.1162cm
]%
{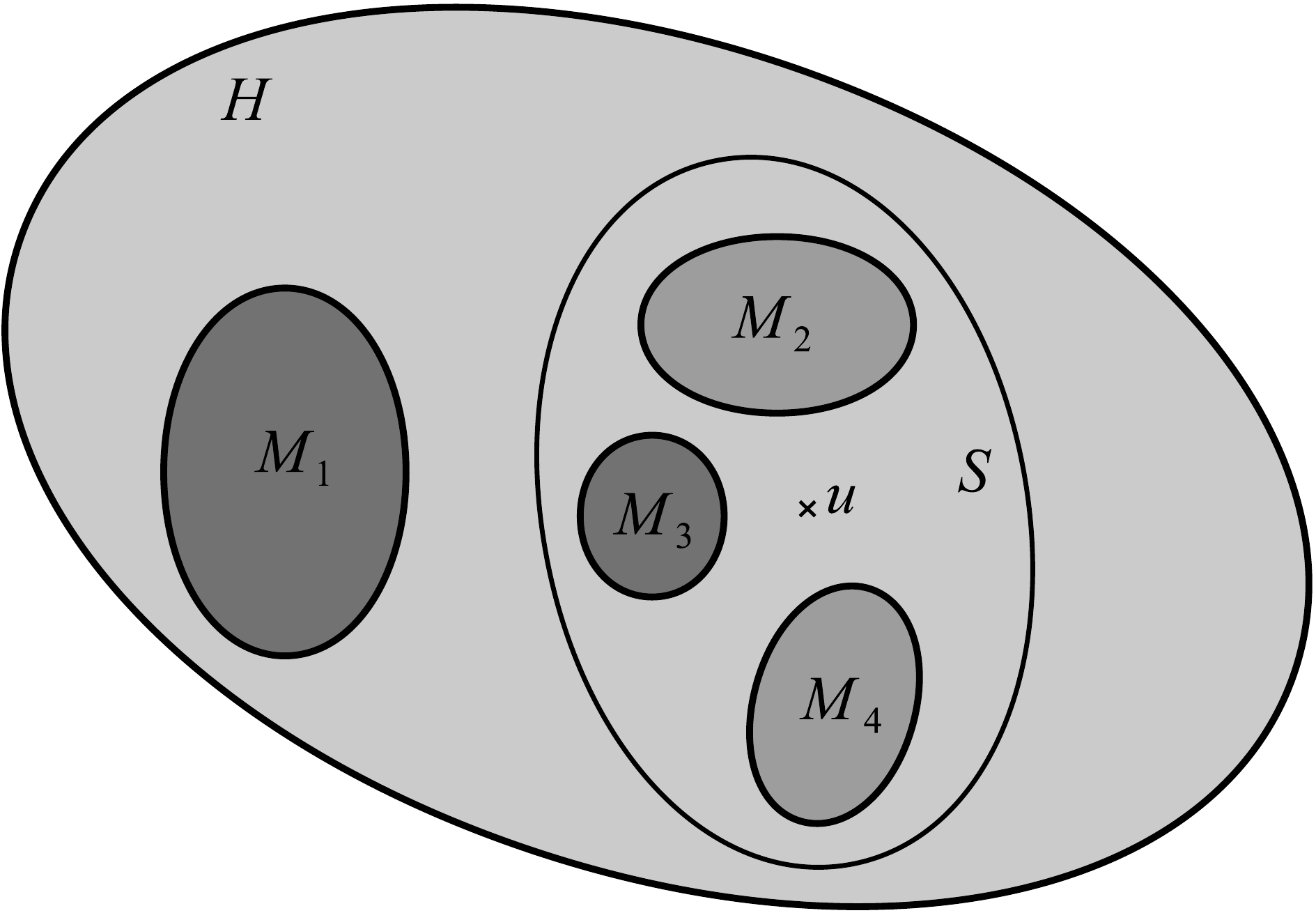}%
\caption{Illustration to the proof of Theorem~\ref{thm ID ==> (HA<==>SZ)}}%
\label{fig proof}%
\end{center}
\end{figure}

The maximum number of (pre-)islands certainly depends on the structure of the
island domain $\left(  \mathcal{C},\mathcal{K}\right)  $. H\"{a}rtel
\cite{Hartel} proved that the maximum number of rectangular islands on a
$1\times n$ board is $n$, and Cz\'{e}dli \cite{czedli} generalized this result
by showing that the maximum number of rectangular islands on an $n\times m$
board is $\left\lfloor \left(  mn+m+n-1\right)  /2\right\rfloor $. Although
these are the only cases where the exact value is known, there are estimates
in several other cases \cite{BHH,HNP,HHNS,L3,P}. In full generality, we have
the following upper bound.

\begin{theorem}
\label{thm maximum<=|U|}If $\left(  \mathcal{C},\mathcal{K}\right)  $ is a
connective island domain and $\mathcal{S}$ is a system of pre-islands
corresponding to $\left(  \mathcal{C},\mathcal{K}\right)  $, then $\left\vert
S\right\vert \leq\left\vert U\right\vert $.
\end{theorem}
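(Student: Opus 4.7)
My plan is to build an injection $\phi\colon \mathcal{S}\to U$ by using the $\operatorname{CDW}$-independence of $\mathcal{S}$ guaranteed by Theorem~\ref{thm ID <==> (SZ==>CD)}. The strategy is standard for laminar-type families: to each set in $\mathcal{S}$ I associate a point that lies in that set but in no strict $\mathcal{S}$-subset, and then check that distinct sets receive distinct points.

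First I would invoke Theorem~\ref{thm ID <==> (SZ==>CD)} \ref{thm ID <==> (SZ==>CD) (i)}$\implies$\ref{thm ID <==> (SZ==>CD) (iii)} to conclude that $\mathcal{S}$ is $\operatorname{CDW}$-independent. Next, fix any $S\in\mathcal{S}$ and let $S_1,\ldots,S_k$ be the elements of $\mathcal{S}$ strictly contained in $S$ (possibly $k=0$). Weak independence, applied to $S$ and the family $\{S_1,\ldots,S_k\}$, yields $S\nsubseteq S_1\cup\cdots\cup S_k$, so I can pick $u_S\in S\setminus (S_1\cup\cdots\cup S_k)$. (If $k=0$ this just uses that $S\neq\emptyset$, which is built into the definition of a pre-island.) Set $\phi(S):=u_S$.

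For injectivity, suppose $S,T\in\mathcal{S}$ satisfy $\phi(S)=\phi(T)=u$. Then $u\in S\cap T$, so by $\operatorname{CD}$-independence the sets $S$ and $T$ are comparable; say $S\subseteq T$. If the inclusion were strict, then $S$ would be one of the strict $\mathcal{S}$-subsets of $T$, and the defining property of $u_T$ would give $u_T\notin S$, contradicting $u=u_T\in S$. Hence $S=T$, proving $\phi$ is injective, and therefore $|\mathcal{S}|\leq|U|$.

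I do not anticipate any real obstacle here: Theorem~\ref{thm ID <==> (SZ==>CD)} does the heavy lifting by upgrading the pre-island property to $\operatorname{CDW}$-independence, after which the argument is purely combinatorial. The only small subtlety is keeping track of the two ingredients separately — $\operatorname{CD}$-independence is what lets me conclude $S\subseteq T$ or $T\subseteq S$ from $u\in S\cap T$, while weak independence is what supplies the witness $u_S$ in the first place.
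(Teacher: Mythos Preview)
Your proof is correct. Both you and the paper invoke Theorem~\ref{thm ID <==> (SZ==>CD)} to obtain $\operatorname{CDW}$-independence of $\mathcal{S}$, but from there the routes diverge. The paper observes that $\mathcal{S}\cup\{\emptyset\}$ is still $\operatorname{CDW}$-independent and then cites the result of \cite{CzSch} that every maximal $\operatorname{CDW}$-independent subset of $\mathcal{P}(U)$ has exactly $|U|+1$ elements, giving $|\mathcal{S}|+1\le|U|+1$. You instead construct an explicit injection $\mathcal{S}\to U$ by assigning to each $S$ a point of $S$ missed by all strict $\mathcal{S}$-subsets of $S$; weak independence supplies the point, and $\operatorname{CD}$-independence gives injectivity. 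Your argument is self-contained and more elementary (no external reference is needed), while the paper's is shorter on the page but outsources the combinatorics. In effect, your injection is precisely what underlies the cited result from \cite{CzSch}, so the two proofs are morally the same once that lemma is unpacked.
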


\begin{proof}
Let $\left(  \mathcal{C},\mathcal{K}\right)  $ be a connective island domain
and let $\mathcal{S}\subseteq\mathcal{C}\setminus\left\{  \emptyset\right\}  $
be a system of pre-islands corresponding to $\left(  \mathcal{C}%
,\mathcal{K}\right)  $. By Theorem~\ref{thm ID <==> (SZ==>CD)}, $\mathcal{S}$
is $\operatorname*{CDW}$-independent, and hence $\mathcal{S}\cup\left\{
\emptyset\right\}  $ is also $\operatorname*{CDW}$-independent. From the
results of \cite{CzSch} it follows that every maximal $\operatorname*{CDW}%
$-independent subset of $\mathcal{P}\left(  U\right)  $ has $\left\vert
U\right\vert +1$ elements. Thus we have $\left\vert \mathcal{S}\right\vert
+1\leq\left\vert U\right\vert +1$.
\end{proof}

Observe that the above mentioned result of H\"{a}rtel shows that the bound
obtained in Theorem~\ref{thm maximum<=|U|} is sharp.

\section{Islands and proximity domains\label{section strict}}

In this section we investigate islands, and we give a characterization of
systems of islands corresponding to island domains $\left(  \mathcal{C}%
,\mathcal{K}\right)  $ satisfying certain natural conditions. We define a
binary relation $\delta\subseteq\mathcal{C}\times\mathcal{C}$ that expresses
the fact that a set $B\in\mathcal{C}$ is in some sense close to a set
$A\in\mathcal{C}$:%
\begin{equation}
A\delta B\Leftrightarrow\exists K\in\mathcal{K}:~\text{ }A\preceq K\text{ and
}K\cap B\neq\emptyset. \label{eq def delta}%
\end{equation}

\begin{remark}
Let us note that the relation $\delta$ is not always symmetric. As an example,
consider a directed graph, and let $\mathcal{C}=\mathcal{K}$ consist of $U$ and of those
sets $S$ of vertices that have a source. (By a source of a set $S$ we mean a
vertex $s\in S$ from which all other vertices of $S$ can be reached by a
directed path that lies entirely in $S$.) It is easy to verify that in the
graph $a\rightarrow b\rightarrow c\leftarrow d\leftarrow e$ we have $A\delta
B$ but not $B\delta A$ for the sets $A=\left\{  a,b\right\}  $ and $B=\left\{
c,d\right\}  $.
\end{remark}

\begin{definition}
\label{def distant}We say that $A,B\in\mathcal{C}$ are \emph{distant} if
neither $A\delta B$ nor $B\delta A$ holds. Obviously, in this case
$A$ and $B$ are also incomparable (in fact, disjoint), whenever $A,B\neq
\emptyset$. A nonempty family $\mathcal{H}\subseteq\mathcal{C}$ will be called
a \emph{distant family}, if any two incomparable members of $\mathcal{H}$ are distant.
\end{definition}

\begin{remark}
\label{rem our delta is almost proximity}It is not difficult to verify that relation
$\delta$ satisfies the following properties for all $A,B,C\in\mathcal{C}$ whenever $B\cup C \in \mathcal{C}: $ %
\begin{align*}
A\delta B  &  \Rightarrow B\neq\emptyset;\\
A\cap B\neq\emptyset &  \Rightarrow A\delta B;\\
A\delta(B\cup C)  &  \Leftrightarrow(A\delta B\text{ or }A\delta C).
\end{align*}

\end{remark}

\begin{lemma}
\label{lemma distant ==> (CDW & HA)}If $\mathcal{H\subseteq C}$ is a distant
family, then $\mathcal{H}$ is $\operatorname*{CDW}$-independent. Moreover, if
$U\in\mathcal{H}$, then $\mathcal{H}$ is admissible.
\end{lemma}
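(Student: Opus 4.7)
The plan is to establish the three conclusions ($\operatorname{CD}$-independence, weak independence, admissibility) in sequence, all resting on one basic observation: for $A \in \mathcal{C} \subseteq \mathcal{K}$ one has $A \preceq A$, so $A \cap B \neq \emptyset$ immediately gives $A \delta B$ (take $K = A$ in (\ref{eq def delta})). In particular, any two distant sets are disjoint. This already yields $\operatorname{CD}$-independence of $\mathcal{H}$: if $A, B \in \mathcal{H}$ are incomparable, they are distant by hypothesis, hence disjoint.

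For weak independence I would invoke Remark~\ref{rem CD maximal subsets} and reduce to showing that if $H, H_1, \dots, H_m \in \mathcal{H}$ with $H_1, \dots, H_m$ pairwise disjoint and $H \subseteq H_1 \cup \cdots \cup H_m$, then $H \subseteq H_j$ for some $j$. Suppose otherwise. By the $\operatorname{CD}$-independence just proved, each $H_i$ is either disjoint from $H$ or a proper subset of $H$; discarding the disjoint ones gives $H = H_1 \sqcup \cdots \sqcup H_m$ with each $H_i \subsetneq H$ and $m \geq 2$. The key step --- and in my view the main obstacle --- is to produce a cover $H_1 \prec K$ in $\mathcal{K}$ with $K \subseteq H$, which exists as the first link of a maximal chain from $H_1$ to $H$ in the finite poset $\mathcal{K}$. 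Then any $x \in K \setminus H_1 \subseteq H \setminus H_1$ lies in some $H_j$ with $j \neq 1$, whence $H_1 \delta H_j$; but $H_1$ and $H_j$ are incomparable members of the distant family $\mathcal{H}$, a contradiction.

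For admissibility (assuming $U \in \mathcal{H}$), let $\mathcal{A} \subseteq \mathcal{H}$ be a nonempty antichain. If $U \in \mathcal{A}$ then $\mathcal{A} = \{U\}$ and (\ref{eq admissible}) holds vacuously at $H = U$, since $U$ has no proper superset in $\mathcal{K}$. Otherwise I would prove directly the stronger form (\ref{eq stronger admissible}): for every $H \in \mathcal{A}$ and every $K \in \mathcal{K}$ with $H \subsetneq K$, $K \nsubseteq \bigcup \mathcal{A}$. If instead some such $K$ were contained in $\bigcup \mathcal{A}$, then passing to a cover $H \prec K$ via the same finite-chain trick and picking $x \in K \setminus H$ lying in some $H' \in \mathcal{A} \setminus \{H\}$ would yield $H \delta H'$ with $H, H'$ incomparable members of $\mathcal{H}$, again contradicting distantness. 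Thus all three parts reduce to the same two ingredients: the identity $A \cap B \neq \emptyset \Rightarrow A \delta B$, and the existence of covers between comparable elements of the finite poset $\mathcal{K}$.
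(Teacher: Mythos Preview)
Your proof is correct and follows essentially the same approach as the paper's: both arguments hinge on the observation that $A\cap B\neq\emptyset\Rightarrow A\delta B$, together with the existence of a cover $H\prec K_1\subseteq K$ in the finite poset $\mathcal{K}$, to derive $H\delta H'$ for incomparable $H,H'\in\mathcal{H}$ and reach a contradiction. The paper is simply terser---it declares $\operatorname{CDW}$-independence ``easy to show using Remark~\ref{rem CD maximal subsets}'' where you spell out the details, and it does not split off the case $\mathcal{A}=\{U\}$---but the substance is identical, including the fact (noted in the paper's subsequent remark) that one actually obtains the stronger condition~(\ref{eq stronger admissible}).
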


\begin{proof}
Let $\mathcal{H}\subseteq\mathcal{C}$ be a distant family, then $\mathcal{H}$
is clearly $\operatorname{CD}$-independent; moreover, it is easy to show using
Remark~\ref{rem CD maximal subsets} that $\mathcal{H}$ is $\operatorname*{CDW}%
$-independent.

Next let us assume that $U\in\mathcal{H}$; we shall prove that $\mathcal{H}$
is admissible. Let $\mathcal{A}\subseteq\mathcal{H}$ be an antichain and let
$H\in\mathcal{A}$. If $K\in\mathcal{K}$ contains $H$ properly, then there is a
cover $K_{1}\in\mathcal{K}$ of $H$ such that $H\prec K_{1}\subseteq K$. Since
all members of $\mathcal{A}\setminus\left\{  H\right\}  $ are distant from
$H$, none of them can intersect $K_{1}$, and therefore we have $K_{1}%
\nsubseteq\bigcup\,\mathcal{A}$, and hence $K\nsubseteq\bigcup\,\mathcal{A}$.
\end{proof}

\begin{remark}
\label{rem distant ==> standard h}Note that we have proved that $\mathcal{H}$
satisfies (\ref{eq stronger admissible}) for every antichain
$\mathcal{A\subseteq H}$. Thus $h_{\mathcal{H}}$ is the standard height
function of $\mathcal{H}$.
\end{remark}

\begin{theorem}
\label{thm ID ==> (distant==>strSZ)}Let $\left(  \mathcal{C},\mathcal{K}%
\right)  $ be a connective island domain and let $\mathcal{H}\subseteq
\mathcal{C}\setminus\left\{  \emptyset\right\}  $ with $U\in\mathcal{H}$. If
$\mathcal{H}$ is a distant family, then $\mathcal{H}$ is a system of islands;
moreover, $\mathcal{H}$ is the system of islands corresponding to its standard
height function.
\end{theorem}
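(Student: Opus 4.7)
The plan is to chain together the results already in place and then perform one extra calculation to upgrade ``pre-island'' to ``island''. First, Lemma~\ref{lemma distant ==> (CDW & HA)} tells us that $\mathcal{H}$ is $\operatorname{CDW}$-independent and admissible, and Remark~\ref{rem distant ==> standard h} tells us that the canonical height function $h_{\mathcal{H}}$ coincides with the standard height function of $\mathcal{H}$. Since $(\mathcal{C},\mathcal{K})$ is a connective island domain, Theorem~\ref{thm ID ==> (HA<==>SZ)} identifies $\mathcal{H}$ as the system of \emph{pre}-islands corresponding to $(\mathcal{C},\mathcal{K},h_{\mathcal{H}})$. Thus the only remaining thing to check is that every $H\in\mathcal{H}$ is in fact an island (not merely a pre-island) with respect to $h_{\mathcal{H}}$; the reverse inclusion then follows from the trivial observation that every island is a pre-island.

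The core step is the following distance argument. Fix $H\in\mathcal{H}$, a cover $K\in\mathcal{K}$ with $H\prec K$, and a point $u\in K\setminus H$. I claim that every $H'\in\mathcal{H}$ containing $u$ is a proper superset of $H$. Indeed, $H'\not\subseteq H$ because $u\notin H$. If $H'$ were incomparable with $H$, then $H'$ and $H$ would be distant, so in particular $\neg(H\,\delta\,H')$ would force $K\cap H'=\emptyset$ (since $H\preceq K\in\mathcal{K}$), contradicting $u\in K\cap H'$. Hence $H'\supsetneq H$.

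With this in hand, the height comparison is immediate. By $\operatorname{CDW}$-independence (Remark~\ref{rem CD maximal subsets}), there exists a point $x\in H$ not contained in any proper subset of $H$ belonging to $\mathcal{H}$; the sets of $\mathcal{H}$ containing such an $x$ are exactly $H$ together with its proper supersets in $\mathcal{H}$, so the standard height function satisfies
\[
\min h_{\mathcal{H}}(H)\;\le\;h_{\mathcal{H}}(x)\;=\;\bigl|\{H'\in\mathcal{H}:H\subseteq H'\}\bigr|-1.
\]
On the other hand, by the claim, the sets of $\mathcal{H}$ containing $u$ form a subset of the proper supersets of $H$ in $\mathcal{H}$, so
\[
h_{\mathcal{H}}(u)\;\le\;\bigl|\{H'\in\mathcal{H}:H\subsetneq H'\}\bigr|-1\;=\;h_{\mathcal{H}}(x)-1\;<\;\min h_{\mathcal{H}}(H).
\]
This is precisely the inequality required for $H$ to be an island at the covering $H\prec K$, and since $u\in K\setminus H$ was arbitrary and one can extend from covers to arbitrary proper supersets by the usual routine argument, $H$ is an island.

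The main obstacle I expect is the very first implication in the key claim: pinning down why a set $H'\in\mathcal{H}$ meeting $K$ outside $H$ cannot be incomparable with $H$. The subtlety is that distance is defined through $\delta$, which involves an existential quantifier over $\mathcal{K}$, so one must be careful to use the contrapositive ``$\neg(H\,\delta\,H')$ means \emph{every} $K_0\succeq H$ in $\mathcal{K}$ misses $H'$'' and then apply it to the specific cover $K$ we are given. Once this observation is in place, the rest of the proof is just bookkeeping with the standard height function.
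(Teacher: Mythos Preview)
Your approach is essentially the paper's: reduce to the pre-island statement via Lemma~\ref{lemma distant ==> (CDW & HA)}, Remark~\ref{rem distant ==> standard h}, and Theorem~\ref{thm ID ==> (HA<==>SZ)}, then upgrade to islands by arguing that for a cover $H\prec K$ and $u\in K\setminus H$, every member of $\mathcal{H}$ containing $u$ must properly contain $H$, so the standard height of $u$ is strictly smaller than $\min h_{\mathcal{H}}(H)$. The paper compresses the last step into two sentences; you spell out the distance argument and the height computation in detail.

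There is, however, a slip in the bookkeeping. You display $\min h_{\mathcal{H}}(H)\le h_{\mathcal{H}}(x)$, but that direction is trivial and not the one you need: your final chain $h_{\mathcal{H}}(u)\le h_{\mathcal{H}}(x)-1<\min h_{\mathcal{H}}(H)$ requires the \emph{reverse} inequality $\min h_{\mathcal{H}}(H)\ge h_{\mathcal{H}}(x)$. This is immediate (every $y\in H$ lies in every $H'\in\mathcal{H}$ with $H\subseteq H'$, hence $h_{\mathcal{H}}(y)\ge\bigl|\{H'\in\mathcal{H}:H\subseteq H'\}\bigr|-1=h_{\mathcal{H}}(x)$), so you should just state the equality $\min h_{\mathcal{H}}(H)=h_{\mathcal{H}}(x)$ and proceed. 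Also, the closing remark about ``extending from covers to arbitrary proper supersets'' is unnecessary: the definition of an island only involves covers $H\prec K$.
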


\begin{proof}
Let $\mathcal{H}\subseteq\mathcal{C}\setminus\left\{  \emptyset\right\}  $ be
a distant family such that $U\in\mathcal{H}$. Applying
Lemma~\ref{lemma distant ==> (CDW & HA)} we obtain that $\mathcal{H}$ is
admissible, hence $\mathcal{H}$ is the system of pre-islands corresponding to
$\left(  \mathcal{C},\mathcal{K},h_{\mathcal{H}}\right)  $ by
Theorem~\ref{thm ID ==> (HA<==>SZ)}. Moreover, $h_{\mathcal{H}}$ is the
standard height function of $\mathcal{H}$ by
Remark~\ref{rem distant ==> standard h}.

To finish the proof, we will prove that each $H\in\mathcal{H}$ is actually an
island with respect to $\left(  \mathcal{C},\mathcal{K},h_{\mathcal{H}%
}\right)  $. Suppose that $K\in\mathcal{K}$ is a cover of $H$. The distantness
of $\mathcal{H}$ implies that the only members of $\mathcal{H}$ that intersect
$K\setminus H$ are the ones that properly contain $H$. Since $h_{\mathcal{H}}$
is the standard height function, $h_{\mathcal{H}}\left(  u\right)  <\min
h_{\mathcal{H}}\left(  H\right)  $ follows for all $u\in K\setminus H$.
\end{proof}

\begin{definition}
\label{def proximity domain}The island domain $\left(  \mathcal{C}%
,\mathcal{K}\right)  $ is called a \emph{proximity domain}, if it is a
connective island domain and the relation $\delta$ is symmetric for nonempty
sets, that is
\begin{equation}
\forall A,B\in\mathcal{C}\setminus\left\{  \emptyset\right\}  :~A\delta
B\Leftrightarrow B\delta A. \label{eq delta symmetric}%
\end{equation}

\end{definition}

If a relation $\delta$ defined on $\mathcal{P}\left(  U\right)  $ satisfies
the three properties of Remark~\ref{rem our delta is almost proximity} and
$\delta$ is symmetric for nonempty sets, then $\left(  U,\delta\right)  $ is
called a \emph{proximity space}. The notion apparently goes back to Frigyes
Riesz \cite{Riesz}, however this axiomatization is due to Vadim~A.~Efremovich
(see \cite{E}).

\begin{proposition}
\label{prop PD ==> (strSZ==>distant)}If $\left(  \mathcal{C},\mathcal{K}%
\right)  $ is a proximity domain, then any system of islands corresponding to
$\left(  \mathcal{C},\mathcal{K}\right)  $ is a distant system.
\end{proposition}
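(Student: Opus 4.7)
The plan is to argue by contradiction, using (i) CD-independence of the system of islands in a connective island domain, and (ii) symmetry of $\delta$ in a proximity domain, to manufacture a contradictory chain of strict inequalities between the minimum heights of two candidate islands.

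First I would fix a height function $h$ such that $\mathcal{S}$ is the system of islands corresponding to $(\mathcal{C},\mathcal{K},h)$. Since every island is a pre-island, $\mathcal{S}$ is contained in the corresponding system of pre-islands, which is $\operatorname{CDW}$-independent (hence $\operatorname{CD}$-independent) by Theorem~\ref{thm ID <==> (SZ==>CD)} applied to the connective island domain $(\mathcal{C},\mathcal{K})$. Consequently, any two incomparable members of $\mathcal{S}$ are already disjoint.

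Now let $A, B \in \mathcal{S}$ be incomparable; by the previous paragraph $A\cap B = \emptyset$. Suppose, for contradiction, that $\mathcal{S}$ is not distant, so (up to swapping $A$ and $B$) $A\delta B$ holds. Since $\delta$ is symmetric on nonempty sets in a proximity domain, we also have $B\delta A$. I would then unpack $A\delta B$: there exists $K\in\mathcal{K}$ with $A\preceq K$ and $K\cap B\neq\emptyset$. The case $A=K$ is ruled out by $A\cap B=\emptyset$, so $A\prec K$. Pick any $u\in K\cap B$; since $A$ and $B$ are disjoint, $u\in K\setminus A$, and because $A$ is an island, $h(u)<\min h(A)$. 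From $u\in B$ we conclude
\[
\min h(B)\le h(u)<\min h(A).
\]
Applying the same argument to $B\delta A$ (using that $B$ is an island) gives $\min h(A)<\min h(B)$, which contradicts the previous inequality.

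The main obstacle—really the only subtle point—is the initial reduction to disjointness of incomparable islands. This needs the observation that the system of islands sits inside the system of pre-islands for the same height function, so Theorem~\ref{thm ID <==> (SZ==>CD)} applies; without this one would have to reprove $\operatorname{CD}$-independence for islands directly. Once disjointness is in hand, the symmetry of $\delta$ together with the island property (which, unlike the pre-island property, controls the heights of \emph{all} points of a cover, not just the minimum) immediately yields the contradictory strict inequalities.
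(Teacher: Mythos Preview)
Your proof is correct and follows essentially the same route as the paper's: reduce to disjointness of incomparable islands via Theorem~\ref{thm ID <==> (SZ==>CD)}, then use symmetry of $\delta$ together with the island property to derive the contradictory strict inequalities $\min h(B)<\min h(A)$ and $\min h(A)<\min h(B)$. Your version is slightly more careful in two places (noting explicitly that $\mathcal{S}$ sits inside the system of pre-islands so the theorem applies, and ruling out $A=K$ from $A\preceq K$), but these are cosmetic differences.
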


\begin{proof}
Let $\left(  \mathcal{C},\mathcal{K}\right)  $ be a proximity domain, and let
$\mathcal{S}$ be the system of islands corresponding to $\left(
\mathcal{C},\mathcal{K},h\right)  $ for some height function $h$. Since
$\left(  \mathcal{C},\mathcal{K}\right)  $ is a connective island domain,
$\mathcal{S}$ is $\operatorname{CD}$-independent according to
Theorem~\ref{thm ID <==> (SZ==>CD)}. Therefore, if $A,B\in\mathcal{S}$ are
incomparable, then we have $A\cap B=\emptyset$. Assume for contradiction that
$A\delta B$, i.e. that there is a set $K\in\mathcal{K}$ such that $A\prec K$
and $B\cap K\neq\emptyset$. Since $A$ and $B$ are disjoint, there exists an
element $b\in\left(  B\cap K\right)  \setminus A$. Similarly, as we have
$B\delta A$ by (\ref{eq delta symmetric}), there exists an element
$a\in\left(  A\cap K^{\prime}\right)  \setminus B$ for some $K^{\prime}%
\in\mathcal{K}$ with $B\prec K^{\prime}$. By making use of the fact that both $A$
and $B$ are islands with respect to $\left(  \mathcal{C},\mathcal{K},h\right)
$, we obtain the following contradicting inequalities:%
\begin{align*}
h\left(  b\right)   &  <\min h\left(  A\right)  \leq h\left(  a\right)  ;\\
h\left(  a\right)   &  <\min h\left(  B\right)  \leq h\left(  b\right)  .%
\qedhere
\end{align*}

\end{proof}

From Theorem~\ref{thm ID ==> (distant==>strSZ)} and
Proposition~\ref{prop PD ==> (strSZ==>distant)} we obtain immediately the
following characterization of systems of islands for proximity domains.

\begin{corollary}
\label{cor PD ==> (distant<==>strSZ)}If $\left(  \mathcal{C},\mathcal{K}%
\right)  $ is a proximity domain, and $\mathcal{H}\subseteq\mathcal{C}%
\setminus\left\{  \emptyset\right\}  $ with $U\in\mathcal{H}$, then
$\mathcal{H}$ is a system of islands if and only if $\mathcal{H}$ is a distant
family. Moreover, in this case $\mathcal{H}$ is the system of islands
corresponding to its standard height function.
\end{corollary}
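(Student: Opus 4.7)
The plan is to observe that the corollary is essentially the conjunction of the two immediately preceding results, so I would split the biconditional and invoke them in turn, taking care only to verify that the hypotheses of each line up with being a proximity domain.

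For the ``only if'' direction, suppose $\mathcal{H}$ is a system of islands corresponding to $(\mathcal{C},\mathcal{K},h)$ for some height function $h$. Since $(\mathcal{C},\mathcal{K})$ is a proximity domain, Proposition~\ref{prop PD ==> (strSZ==>distant)} applies verbatim and yields that $\mathcal{H}$ is a distant family. There is nothing more to do here beyond quoting the proposition.

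For the ``if'' direction and the ``moreover'' clause, suppose $\mathcal{H}\subseteq\mathcal{C}\setminus\{\emptyset\}$ is a distant family with $U\in\mathcal{H}$. The key observation is that a proximity domain is by Definition~\ref{def proximity domain} in particular a connective island domain, so the hypotheses of Theorem~\ref{thm ID ==> (distant==>strSZ)} are met. That theorem then delivers both conclusions at once: $\mathcal{H}$ is a system of islands, and in fact it is the system of islands corresponding to its standard height function $h_{\mathcal{H}}$.

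Since neither step requires any new construction, the only thing that needs to be checked is that I have matched the two previous results correctly: Proposition~\ref{prop PD ==> (strSZ==>distant)} uses only the proximity hypothesis (via symmetry of $\delta$ together with $\operatorname{CD}$-independence from Theorem~\ref{thm ID <==> (SZ==>CD)}), while Theorem~\ref{thm ID ==> (distant==>strSZ)} uses only the connective island domain hypothesis. There is no genuine obstacle; the work was done in the two results being cited, and the statement is packaged as a corollary precisely to record this combination.
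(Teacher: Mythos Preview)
Your proof is correct and matches the paper's approach exactly: the paper states the corollary as an immediate consequence of Theorem~\ref{thm ID ==> (distant==>strSZ)} and Proposition~\ref{prop PD ==> (strSZ==>distant)}, and you have correctly identified which result handles each direction and verified that the proximity-domain hypothesis subsumes the connective-island-domain hypothesis needed for the theorem. One minor notational point: in the paper $h_{\mathcal{H}}$ denotes the canonical height function, which for distant families coincides with the standard height function (Remark~\ref{rem distant ==> standard h}), so your identification of the two is justified but worth making explicit.
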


Finally, let us consider the following condition on $\left(  \mathcal{C}%
,\mathcal{K}\right)  $, which is stronger than that of being a connective
island domain:%
\begin{equation}
\forall K_{1},K_{2}\in\mathcal{K}:~K_{1}\cap K_{2}\neq\emptyset\implies
K_{1}\cup K_{2}\in\mathcal{K}. \label{eq KK}%
\end{equation}
Observe that if we have a graph structure on $U$, and $\left(  \mathcal{C}%
,\mathcal{K}\right)  $ is a corresponding island domain (cf.
Example~\ref{example graphs}), then (\ref{eq KK}) holds.

\begin{theorem}
\label{thm KK ==> proximity}Suppose that $\left(  \mathcal{C},\mathcal{K}%
\right)  $ satisfies condition $\left(  \ref{eq KK}\right)  $, and assume that
for all $C\in\mathcal{C}$, $K\in\mathcal{K}$ with $C\prec K$ we have
$\left\vert K\setminus C\right\vert =1$. Then $\left(  \mathcal{C}%
,\mathcal{K}\right)  $ is a proximity domain, and pre-islands and islands
corresponding to $\left(  \mathcal{C},\mathcal{K}\right)  $ coincide.
Therefore, if $\mathcal{H}\subseteq\mathcal{C}\setminus\left\{  \emptyset
\right\}  $ and $U\in\mathcal{H}$, then $\mathcal{H}$ is a system of
(pre-)islands if and only if $\mathcal{H}$ is a distant family. Moreover, in
this case $\mathcal{H}$ is the system of (pre-)islands corresponding to its
standard height function.
\end{theorem}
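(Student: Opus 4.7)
The plan is to verify the three structural assertions of the theorem in order, and then derive the final ``distant $\Leftrightarrow$ system of islands'' statement from Corollary~\ref{cor PD ==> (distant<==>strSZ)}. Two of the three structural assertions are essentially immediate from what has already been developed; the real work is the symmetry of $\delta$.

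First I would show that $\left(\mathcal{C},\mathcal{K}\right)$ is a connective island domain. Given $A,B\in\mathcal{C}$ with $A\cap B\neq\emptyset$ and $B\nsubseteq A$, condition (\ref{eq KK}) applied to $A,B\in\mathcal{C}\subseteq\mathcal{K}$ yields $A\cup B\in\mathcal{K}$, and since $B\nsubseteq A$ we have $A\subset A\cup B\subseteq A\cup B$, so $K:=A\cup B$ witnesses (\ref{eq def ID}). The coincidence of pre-islands and islands is exactly the statement of Remark~\ref{remark basic}(b) under our second hypothesis, so nothing needs to be proved there.

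The main obstacle is symmetry of $\delta$ on nonempty sets. Suppose $A\delta B$ with $A,B\in\mathcal{C}\setminus\{\emptyset\}$. If $A\cap B\neq\emptyset$, then $K':=B$ itself gives $B\preceq K'$ and $K'\cap A\neq\emptyset$, so $B\delta A$. Otherwise $A\cap B=\emptyset$. Choose $K\in\mathcal{K}$ with $A\preceq K$ and $K\cap B\neq\emptyset$; since $A\cap B=\emptyset$ we must have $A\prec K$, and the singleton-cover hypothesis gives $K=A\cup\{u\}$ with $u\notin A$, forcing $u\in B$. Now I apply (\ref{eq KK}) to $K$ and $B$, which intersect in $\{u\}$: this produces $A\cup B=K\cup B\in\mathcal{K}$. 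Because $A\neq\emptyset$ and $A\cap B=\emptyset$, we have $B\subsetneq A\cup B$ in $\mathcal{K}$, so a maximal chain from $B$ to $A\cup B$ in the finite poset $(\mathcal{K},\subseteq)$ supplies a cover $Y\in\mathcal{K}$ of $B$ with $Y\subseteq A\cup B$. Applying the singleton-cover hypothesis to $B\in\mathcal{C}$ and $Y$ we get $Y=B\cup\{v\}$ with $v\notin B$, and then $Y\subseteq A\cup B$ forces $v\in A$. Hence $B\preceq Y\in\mathcal{K}$ and $Y\cap A\ni v$, i.e.\ $B\delta A$.

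Having verified that $\left(\mathcal{C},\mathcal{K}\right)$ is a proximity domain in which pre-islands coincide with islands, the ``moreover'' assertion is then a direct application of Corollary~\ref{cor PD ==> (distant<==>strSZ)}: a family $\mathcal{H}\subseteq\mathcal{C}\setminus\{\emptyset\}$ containing $U$ is a system of islands iff it is distant, and in that case $\mathcal{H}$ is realized by its standard height function. Since islands and pre-islands coincide, the same conclusion holds verbatim for systems of pre-islands. The only delicate step, and the one I expect to need the most care in writing up, is the disjoint subcase above, because it is the unique place where both hypotheses---(\ref{eq KK}) and the singleton-cover condition---are used together, each exactly once.
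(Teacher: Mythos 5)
Your proposal is correct and follows essentially the same route as the paper's proof: connectivity from (\ref{eq KK}) applied to the two intersecting members of $\mathcal{C}$, the coincidence of pre-islands and islands from the singleton-cover hypothesis, symmetry of $\delta$ in the disjoint case by writing $K=A\cup\{b\}$, forming $K\cup B=A\cup B\in\mathcal{K}$ via (\ref{eq KK}), and extracting a cover of $B$ inside $A\cup B$, then concluding via Corollary~\ref{cor PD ==> (distant<==>strSZ)}. The only (harmless) difference is that you invoke the singleton-cover hypothesis a second time to identify the cover $Y$ of $B$, whereas $Y\supsetneq B$ and $Y\subseteq A\cup B$ already force $Y\cap A\neq\emptyset$.
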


\begin{proof}
Let $A,B\in\mathcal{C}\setminus\left\{  \emptyset\right\}  $ such that
$A\delta B$, i.e. $K\cap B\neq\emptyset$ for some $K\in\mathcal{K}$ with
$A\preceq K$. If $A\cap B\neq\emptyset$, then clearly $B\delta A$ holds.
Suppose now that $A\cap B=\emptyset$. By our assumption, $K=A\cup\left\{
b\right\}  $ for some $b\in B$. From (\ref{eq KK}) it follows that $K\cup
B\in\mathcal{K}$. Since $B\subset A\cup B=K\cup B\in\mathcal{K}$, there exists
a cover $K^{\prime}\in\mathcal{K}$ of $B$ such that $B\prec K^{\prime
}\subseteq A\cup B$. Clearly, we have $K^{\prime}\cap A\neq\emptyset$, hence
$B\delta A$, and this proves that the relation $\delta$ is symmetric.
Condition (\ref{eq KK}) is stronger than (\ref{eq def ID}), therefore $\left(
\mathcal{C},\mathcal{K}\right)  $ is a proximity domain.

From our assumptions it is trivial that every pre-island with respect to
$\left(  \mathcal{C},\mathcal{K}\right)  $ is also an island. The last two
statements follow then from Corollary~\ref{cor PD ==> (distant<==>strSZ)}.
\end{proof}

\begin{corollary}
Let $G$ be a graph with vertex set $U$; let $\left(  \mathcal{C}%
,\mathcal{K}\right)  $ be an island domain corresponding to $G$ (cf.
Example~\ref{example graphs}), and let $\mathcal{H}\subseteq\mathcal{C}%
\setminus\left\{  \emptyset\right\}  $ with $U\in\mathcal{H}$. Then
$\mathcal{H}$ is a system of (pre-)islands if and only if $\mathcal{H}$ is
distant; moreover, in this case $\mathcal{H}$ is the system of (pre-)islands
corresponding to its standard height function.
\end{corollary}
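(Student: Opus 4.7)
The plan is to deduce this Corollary directly from Theorem~\ref{thm KK ==> proximity} by verifying that the island domain $(\mathcal{C},\mathcal{K})$ arising from a graph $G$ satisfies the two hypotheses of that theorem: the closure condition~$(\ref{eq KK})$, and the covering condition $|K\setminus C|=1$ whenever $C\prec K$. Once both are established, Theorem~\ref{thm KK ==> proximity} immediately gives, for any $\mathcal{H}\subseteq\mathcal{C}\setminus\{\emptyset\}$ containing $U$, that $\mathcal{H}$ is a system of (pre-)islands if and only if $\mathcal{H}$ is distant, and that in the affirmative case the standard height function of $\mathcal{H}$ realises it.

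The first hypothesis is immediate from the description of $\mathcal{K}$ in Example~\ref{example graphs}: if $K_1,K_2$ are connected subsets of $U$ sharing at least one vertex, then any two vertices of $K_1\cup K_2$ can be joined by concatenating a path in $K_1$ and a path in $K_2$ through a common vertex, so $K_1\cup K_2$ is connected and hence lies in $\mathcal{K}$. For the second, suppose $C,K\in\mathcal{K}$ with $C\prec K$. Since the subgraph induced by $K$ is connected and $C$ is a proper nonempty subset of $K$, there must exist an edge of $G$ joining some $c\in C$ to some $v\in K\setminus C$. Then $C\cup\{v\}$ is connected, so $C\cup\{v\}\in\mathcal{K}$, and it satisfies $C\subsetneq C\cup\{v\}\subseteq K$. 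The covering relation $C\prec K$ in $(\mathcal{K},\subseteq)$ therefore forces $C\cup\{v\}=K$, whence $|K\setminus C|=1$.

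The only step requiring anything beyond direct citation is the verification of the cover condition, and it reduces to the elementary observation that in a connected graph no nonempty proper vertex subset is separated from its complement by an empty edge-cut. I do not expect any real obstacle here; with both hypotheses confirmed, the conclusion is an immediate instance of Theorem~\ref{thm KK ==> proximity}, and no further argument is needed.
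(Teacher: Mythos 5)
Your proposal is correct and follows exactly the route the paper intends: the corollary is stated as an immediate instance of Theorem~\ref{thm KK ==> proximity}, with the paper merely observing beforehand that condition~(\ref{eq KK}) holds for graph-based island domains (and the cover condition $\left\vert K\setminus C\right\vert =1$ being implicit already in Example~\ref{example graphs} via Remark~\ref{remark basic}). You simply spell out the two elementary graph-theoretic verifications that the paper leaves to the reader, and both of your arguments are sound.
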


\section{Concluding remarks and an alternative framework}

We introduced the notion of a (pre-)island corresponding to an island domain
$\left(  \mathcal{C},\mathcal{K}\right)  $, where $U\in\mathcal{C}%
\subseteq\mathcal{K}\subseteq\mathcal{P}\left(  U\right)  $ for a nonempty
finite set $U$. We described island domains $\left(  \mathcal{C}%
,\mathcal{K}\right)  $ having $\operatorname{CD}$-independent systems of
pre-islands, and we characterized systems of (pre-)islands for such island
domains. In the general case, when no assumption is made on $\left(
\mathcal{C},\mathcal{K}\right)  $, we gave a necessary condition for a family
of sets to be a system of pre-islands, and it remains an open problem to find
an appropriate necessary and sufficient condition. Nevertheless, we obtained a
complete characterization of \emph{maximal} systems of pre-islands in this
general case. Determining the size of these maximal systems of pre-islands for
specific island domains $\left(  \mathcal{C},\mathcal{K}\right)  $ has been,
and continues to be, a topic of active research.

Before concluding the paper, let us propose another possible approach to
define islands. Let $U$ be a nonempty finite set and let $\mathcal{C}%
\subseteq\mathcal{P}\left(  U\right)  $ with $U\in\mathcal{C}$, as before. We
describe the \textquotedblleft surroundings\textquotedblright\ of members of
$\mathcal{C}$ by means of a relation $\eta\subseteq U\times\mathcal{C}$, where
$u\eta C$ means that the point $u\in U$ is close to the set $C\in\mathcal{C}$.
We require $\eta$ to satisfy the following very natural axiom:%
\begin{equation}
\forall u\in U~\forall C\in\mathcal{C}:~u\in C\implies u\eta C.
\label{eq eta axiom}%
\end{equation}
Examples of such \textquotedblleft point-to-set\textquotedblright\ proximity
relations include closure systems (in particular, topological spaces) with
$u\eta C$ if and only if $u$ belongs to the closure of $C$, and graphs with
$u\eta C$ if and only if $u$ belongs to the neighborhood of $C$. We shall call
a pair $\left(  \mathcal{C},\eta\right)  $ satisfying (\ref{eq eta axiom}) an
\emph{island domain}.

For any $C\in\mathcal{C}$, the set $\partial C:=\left\{  u\in U\colon u\eta
C\text{ and }u\notin C\right\}  $ is the set of points that surround $C$ (note
that this is \emph{not} the usual notion of boundary for topological spaces).
Therefore, we define islands corresponding to $\left(  \mathcal{C}%
,\eta\right)  $ as follows: If $h\colon U\rightarrow\mathbb{R}$ is a height
function and $S\in\mathcal{C}$, then we say that $S$ is an \emph{island with
respect to }$\left(  \mathcal{C},\eta,h\right)  $, if $h\left(  u\right)
<\min h\left(  S\right)  $ holds for all $u\in\partial S$. This definition is
similar in spirit to the definition of an island corresponding to an island
domain $\left(  \mathcal{C},\mathcal{K}\right)  $; in fact, it is a
generalization of it. To see this, let us consider a pair $\left(
\mathcal{C},\mathcal{K}\right)  $, and let us define $\eta\subseteq
U\times\mathcal{C}$ as follows:%
\[
u\eta C\iff\exists K\in\mathcal{K}:C\preceq K\text{ and }u\in K.
\]
It is easy to verify that the islands corresponding to $\left(  \mathcal{C}%
,\eta\right)  $ are exactly the islands corresponding to $\left(
\mathcal{C},\mathcal{K}\right)  $.

Let us now briefly sketch how to adapt the definitions of admissibility,
connective island domain and distantness to this setting. We shall say that
$\mathcal{H}\subseteq\mathcal{C}\setminus\left\{  \emptyset\right\}  $ is
\emph{admissible}, if $U\in\mathcal{H}$, and for every antichain
$\mathcal{A}\subseteq\mathcal{H}$ we have%
\[
\exists H\in\mathcal{A}\text{ such that }\forall u\in U:~u\in\partial
H\implies u\notin\bigcup\,\mathcal{A}.
\]
We call the pair $\left(  \mathcal{C},\eta\right)  $ a \emph{connective island
domain} if%
\[
\forall A,B\in\mathcal{C}:~\left(  A\cap B\neq\emptyset\text{ and }B\nsubseteq
A\right)  \implies\exists u\in B\setminus A:u\eta A.
\]
To define distantness, we extend $\eta$ to a \textquotedblleft
set-to-set\textquotedblright\ proximity relation $\delta\subseteq
\mathcal{C}\times\mathcal{C}$: for $A,B\in\mathcal{C}$, let $A\delta B$ if and
only if there exists a point $u\in B$ with $u\eta A$. Using this relation
$\delta$, we can define distant families just as in
Definition~\ref{def distant}.

Most of the results of this paper remain valid with these new definitions, and
the proofs require only minor and quite straightforward modifications. The
only exceptions are Lemma~\ref{lemma distant ==> (CDW & HA)}, where we need
the extra assumption that $\left(  \mathcal{C},\eta\right)  $ is a connective
island domain, and Theorem~\ref{thm KK ==> proximity}, which cannot be
interpreted in this framework, as it refers to $\mathcal{K}$. The following
theorem summarizes the main results.

\begin{theorem}
\label{thm eta}Let $U$ be a nonempty finite set, let $\mathcal{C}%
\subseteq\mathcal{P}\left(  U\right)  $ with $U\in\mathcal{C}$, and let
$\eta\subseteq U\times\mathcal{C}$ satisfy $\left(  \ref{eq eta axiom}\right)
$.%
\renewcommand{\theenumi}{(\roman{enumi})}
\renewcommand{\labelenumi}{\theenumi}%

\begin{enumerate}
\item A family $\mathcal{H}\subseteq\mathcal{C}\setminus\left\{
\emptyset\right\}  $ is contained in a system of islands if and only if
$\mathcal{H}$ is admissible.

\item A family $\mathcal{H}\subseteq\mathcal{C}\setminus\left\{
\emptyset\right\}  $ is a maximal system of islands if and only if
$\mathcal{H}$ is a maximal admissible family.

\item The pair $\left(  \mathcal{C},\eta\right)  $ is a connective island
domain if and only if all systems of islands are $\operatorname{CD}%
$-independent (equivalently, $\operatorname*{CDW}$-independent).

\item If $\left(  \mathcal{C},\eta\right)  $ is a connective island domain,
then a family $\mathcal{H}\subseteq\mathcal{C}\setminus\left\{  \emptyset
\right\}  $ is a system of islands if and only if $\mathcal{H}$ is admissible.

\item If $\left(  \mathcal{C},\eta\right)  $ is a connective island domain and
the corresponding relation $\delta$ is symmetric, then a family $\mathcal{H}%
\subseteq\mathcal{C}\setminus\left\{  \emptyset\right\}  $ is a system of
islands if and only if $\mathcal{H}$ is distant and $U\in\mathcal{H}$.
Moreover, in this case $\mathcal{H}$ is the system of islands corresponding to
its standard height function.
\end{enumerate}
\end{theorem}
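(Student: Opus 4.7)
The overall plan is to mirror the proofs of Sections~\ref{section admissible}--\ref{section strict} in the $(\mathcal{C},\eta)$ setting. The key correspondence is that the new island condition ``$h(u)<\min h(S)$ for all $u\in\partial S$'' plays the exact role previously played by ``$h(u)<\min h(S)$ for all $u\in K\setminus S$ and every $K\in\mathcal{K}$ with $S\preceq K$'', while the new admissibility condition ``$\partial H\cap\bigcup\mathcal{A}=\emptyset$'' corresponds to ``$K\nsubseteq\bigcup\mathcal{A}$ for all $K\in\mathcal{K}$ with $H\subset K$''. With these correspondences fixed, each earlier result admits a transparent translation.

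For (i) and (ii), the plan is to adapt Propositions~\ref{prop SZ==>HA} and~\ref{prop HA==>SZ resze}. Given a system of islands $\mathcal{S}$ for $h$ and an antichain $\mathcal{A}\subseteq\mathcal{S}$, any $H\in\mathcal{A}$ with $\min h(H)$ minimal witnesses admissibility: for $u\in\partial H$ one has $h(u)<\min h(H)\le\min h(H')$ for every $H'\in\mathcal{A}$, so $u$ lies in no member of $\mathcal{A}$. Conversely, re-defining $\mathcal{H}^{(i)}$ by replacing~\eqref{eq def canonical height function: set} with $\partial H\cap\bigcup(\mathcal{H}\setminus(\mathcal{H}^{(0)}\cup\cdots\cup\mathcal{H}^{(i-1)}))=\emptyset$ produces a canonical height function $h_{\mathcal{H}}$ for which each $H\in\mathcal{H}^{(i)}$ satisfies $\min h_{\mathcal{H}}(H)\ge i$, while every $u\in\partial H$ is excluded from $\bigcup_{j\ge i}\mathcal{H}^{(j)}$, hence $h_{\mathcal{H}}(u)<i$; thus $H$ is an island of $h_{\mathcal{H}}$, proving (i). Part (ii) then follows from (i) exactly as in Corollary~\ref{cor maximal HA <==> maximal SZ}.

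For (iii) and (iv), I would first prove the analogue of Lemma~\ref{lemma ID ==> (HA==>CDW)}: if $A,B\in\mathcal{H}$ are neither comparable nor disjoint, the new connective axiom supplies some $u\in B\setminus A$ with $u\eta A$ and, symmetrically, some $v\in A\setminus B$ with $v\eta B$, simultaneously blocking both $A$ and $B$ from satisfying admissibility for $\mathcal{A}=\{A,B\}$; the same argument applied to a decomposition $H=M_{1}\cup\cdots\cup M_{m}$ in $\mathcal{H}$ yields $\operatorname{CDW}$-independence. The converse direction in (iii) uses the same height function $h\equiv 2$ on $B$, $\equiv 1$ on $A\setminus B$, $\equiv 0$ elsewhere as in Theorem~\ref{thm ID <==> (SZ==>CD)}: the failure of the connective axiom gives $\partial A\cap B=\emptyset$, so $A$ is an island, $B$ is trivially one, and the pair is not $\operatorname{CD}$. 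For (iv) I would copy the proof of Theorem~\ref{thm ID ==> (HA<==>SZ)}: if $S$ were an island of $h_{\mathcal{H}}$ with $S\notin\mathcal{H}$, then $\operatorname{CDW}$-independence of $\mathcal{S}$ forces $S$ to contain a point from $H\setminus\bigcup M_{i}$ (where $H$ is the minimal element of $\mathcal{H}$ containing $S$), making $\min h_{\mathcal{H}}(S)=\min h_{\mathcal{H}}(H)=c$; applying the new connective axiom to $(S,H)$ produces some $u\in H\setminus S$ with $u\in\partial S$ and $h_{\mathcal{H}}(u)\ge c$, contradicting islandhood. This last step is the main technical obstacle, since one must verify that the new $\partial$-based definition still delivers a witness inside $H\setminus S$ from the connective axiom.

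For (v), the plan is to adapt Lemma~\ref{lemma distant ==> (CDW & HA)} and Theorem~\ref{thm ID ==> (distant==>strSZ)}: a distant family $\mathcal{H}$ with $U\in\mathcal{H}$ is $\operatorname{CDW}$-independent, and under the new $\delta$ given by ``$A\delta B$ iff some $u\in B$ has $u\eta A$'' it automatically satisfies the strengthened admissibility~\eqref{eq stronger admissible}, since $u\in\partial H\cap H'$ for incomparable $H,H'\in\mathcal{H}$ would give $H\delta H'$, violating distantness. Consequently $h_{\mathcal{H}}$ coincides with the standard height function, and part~(iv) gives that $\mathcal{H}$ is exactly the system of islands for $h_{\mathcal{H}}$. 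For the converse, I would mimic Proposition~\ref{prop PD ==> (strSZ==>distant)}: from two incomparable (hence, by (iii), disjoint) islands $A,B$ of a system with $A\delta B$, extract $u\in B$ with $u\in\partial A$ and, by symmetry of $\delta$, $v\in A$ with $v\in\partial B$, producing the cyclic impossibility $h(u)<\min h(A)\le h(v)<\min h(B)\le h(u)$. Combining both directions yields the characterization asserted in (v).
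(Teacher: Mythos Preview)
Your proposal is correct and follows essentially the same approach as the paper: the paper's own ``proof'' of Theorem~\ref{thm eta} is merely the paragraph stating that the arguments of Sections~\ref{section admissible}--\ref{section strict} carry over with minor modifications (flagging only that Lemma~\ref{lemma distant ==> (CDW & HA)} now requires the connective hypothesis), and you have carried out precisely those translations, filling in the details faithfully. Your direct choice of a witness $H$ with minimal $\min h(H)$ in part~(i), and your explicit use of the connective axiom applied to $(S,H)$ to produce a boundary point of $S$ inside $H$ in part~(iv), are exactly the ``straightforward modifications'' the paper alludes to.
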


\begin{corollary}
\label{cor eta}Let $G=\left(  U,E\right)  $ be a connected simple graph, let
$\mathcal{C}\subseteq\mathcal{P}\left(  U\right)  $ be a family of connected
subsets with $U\in\mathcal{C}$, and let us define $\eta\subseteq
U\times\mathcal{C}$ by%
\[
u\eta C\iff u\in C\text{ or }\exists v\in C:~uv\in E.
\]
Then the following three conditions are equivalent for any $\mathcal{H}%
\subseteq\mathcal{C}\setminus\left\{  \emptyset\right\}  $ with $U\in
\mathcal{H}$:%
\renewcommand{\theenumi}{(\roman{enumi})}
\renewcommand{\labelenumi}{\theenumi}%

\begin{enumerate}
\item $\mathcal{H}$ is a system of islands corresponding to $\left(
\mathcal{C},\eta\right)  $.

\item $\mathcal{H}$ is an admissibly family.

\item $\mathcal{H}$ is a distant family.
\end{enumerate}

\noindent If these conditions hold, then $\mathcal{H}$ is the system of
islands corresponding to its standard height function.
\end{corollary}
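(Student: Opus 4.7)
The plan is to derive this corollary from Theorem~\ref{thm eta} by checking the two structural hypotheses needed: first, that $(\mathcal{C},\eta)$ is a connective island domain, and second, that the associated set-to-set proximity relation $\delta$ is symmetric. Once these are in place, part~(iv) of Theorem~\ref{thm eta} gives (i)$\iff$(ii), part~(v) gives (i)$\iff$(iii), and the ``moreover'' clause is exactly the concluding assertion of part~(v).

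The first step is to verify the connective island domain condition: given $A,B\in\mathcal{C}$ with $A\cap B\neq\emptyset$ and $B\nsubseteq A$, we must exhibit $u\in B\setminus A$ with $u\eta A$. This is where the assumption that members of $\mathcal{C}$ are connected subgraphs does the work. Pick any $x\in A\cap B$ and any $y\in B\setminus A$; connectedness of $B$ supplies a path $x=w_0,w_1,\ldots,w_k=y$ entirely inside $B$. Let $i$ be the smallest index with $w_i\notin A$; then $i\geq 1$, $w_{i-1}\in A\cap B$, and $w_{i-1}w_i\in E$, so $u:=w_i\in B\setminus A$ satisfies $u\eta A$ by the edge part of the definition of $\eta$. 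This confirms that $(\mathcal{C},\eta)$ is a connective island domain.

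The second step is to check symmetry of $\delta$. Unwinding the definitions, $A\delta B$ means there is $u\in B$ with either $u\in A$ or with an edge $uv\in E$ for some $v\in A$. The first alternative is $A\cap B\neq\emptyset$, which is symmetric in $A,B$; the second alternative asserts the existence of an edge between $A$ and $B$, which is symmetric because $G$ is an undirected simple graph. Hence $A\delta B\iff B\delta A$.

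With both hypotheses verified, Theorem~\ref{thm eta}(iv) and (v) yield the three equivalences and the statement about the standard height function directly. The only genuine content is the path-tracing argument of the first step; once connectedness of $\mathcal{C}$-members is exploited there, the rest is a citation of Theorem~\ref{thm eta}, so I do not expect any real obstacle beyond writing the short path argument cleanly.
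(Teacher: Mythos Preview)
Your proposal is correct and follows exactly the paper's approach: verify that $(\mathcal{C},\eta)$ is a connective island domain (using connectedness of the members of $\mathcal{C}$) and that $\delta$ is symmetric, then invoke Theorem~\ref{thm eta}. The paper's proof merely asserts these two verifications without detail, whereas you spell out the path-tracing argument and the symmetry check explicitly; there is no substantive difference.
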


\begin{proof}
The fact that $\mathcal{C}$ contains only connected sets ensures that $\left(
\mathcal{C},\eta\right)  $ is a connective island domain, and it is trivial
that $\delta$ is symmetric, hence we can apply Theorem~\ref{thm eta}.
\end{proof}

Let us note that in Corollary~\ref{cor eta} distantness of two sets
$A,B\in\mathcal{C}$ means that there is no edge with one endpoint in $A$ and
the other endpoint in $B$. Applying this corollary to a square grid (on a
rectangular, cylindrical or toroidal board) or to a triangular grid, and
letting $\mathcal{C}$ consist of all rectangles, squares or triangles, we
obtain the earlier dry characterizations of islands as special cases.

\subsection*{Acknowledgments}

S\'{a}ndor Radeleczki acknowledges that this research was carried out as part
of the TAMOP-4.2.1.B-10/2/KONV-2010-0001 project supported by the European
Union, co-financed by the European Social Fund.

Eszter K. Horv\'{a}th and Tam\'{a}s Waldhauser acknowledge the support 
of the Hungarian National
Foundation for Scientific Research under grant no. K83219.
Supported by the European Union and co-funded by the European Social Fund
   under the project ``Telemedicine-focused research activities on the field of Matematics,
   Informatics and Medical sciences'' of project number
     ``T\'AMOP-4.2.2.A-11/1/KONV-2012-0073''

Stephan Foldes acknowledges that this work has been co-funded by Marie Curie
Actions and supported by the National Development Agency (NDA) of Hungary and
the Hungarian Scientific Research Fund (OTKA, contract number 84593), within a project hosted by the
University of Miskolc, Department of Analysis. The work was also completed as
part of the TAMOP-4.2.1.B.- 10/2/KONV-2010-0001 project at the University of
Miskolc, with support from the European Union, co-financed by the European
Social Fund.

\bigskip

\bigskip

\includegraphics[height=0.9cm]{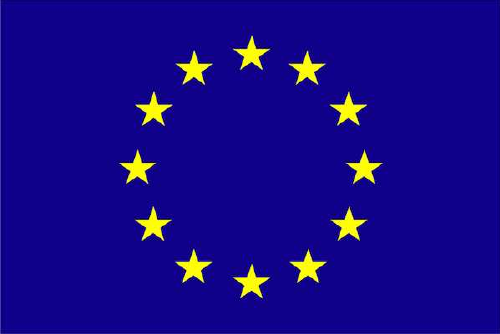} \quad
\includegraphics[height=1.2cm]{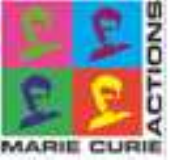} \quad
\includegraphics[height=1.2cm]{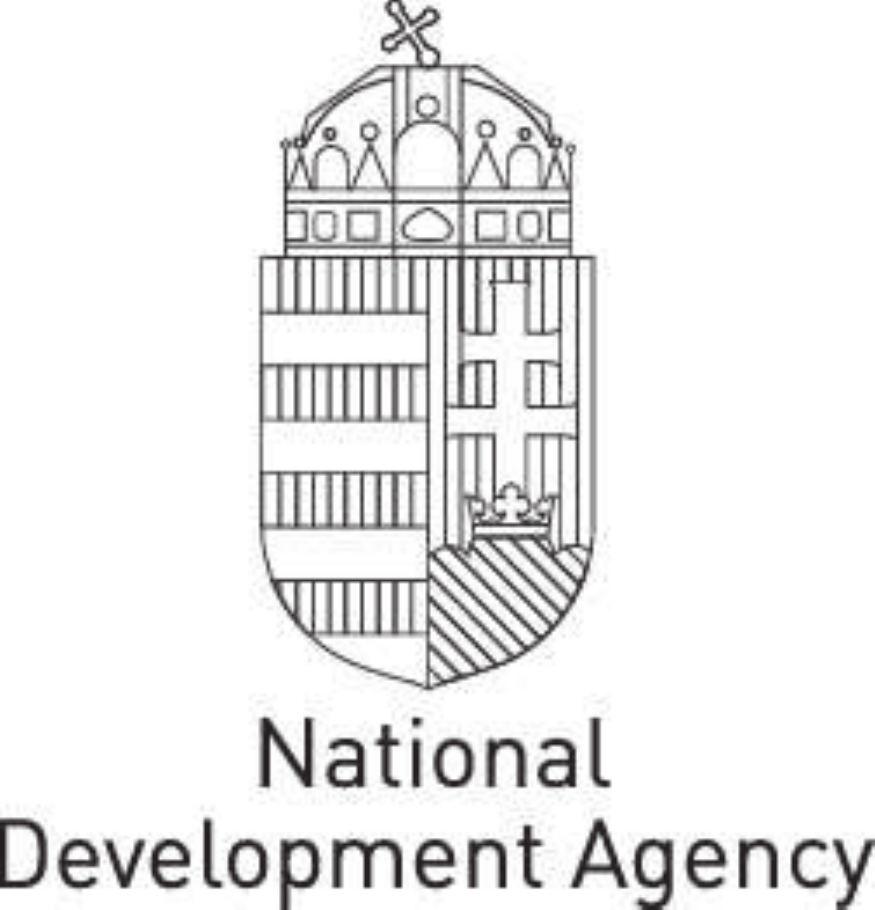} \quad\includegraphics[height=1cm]{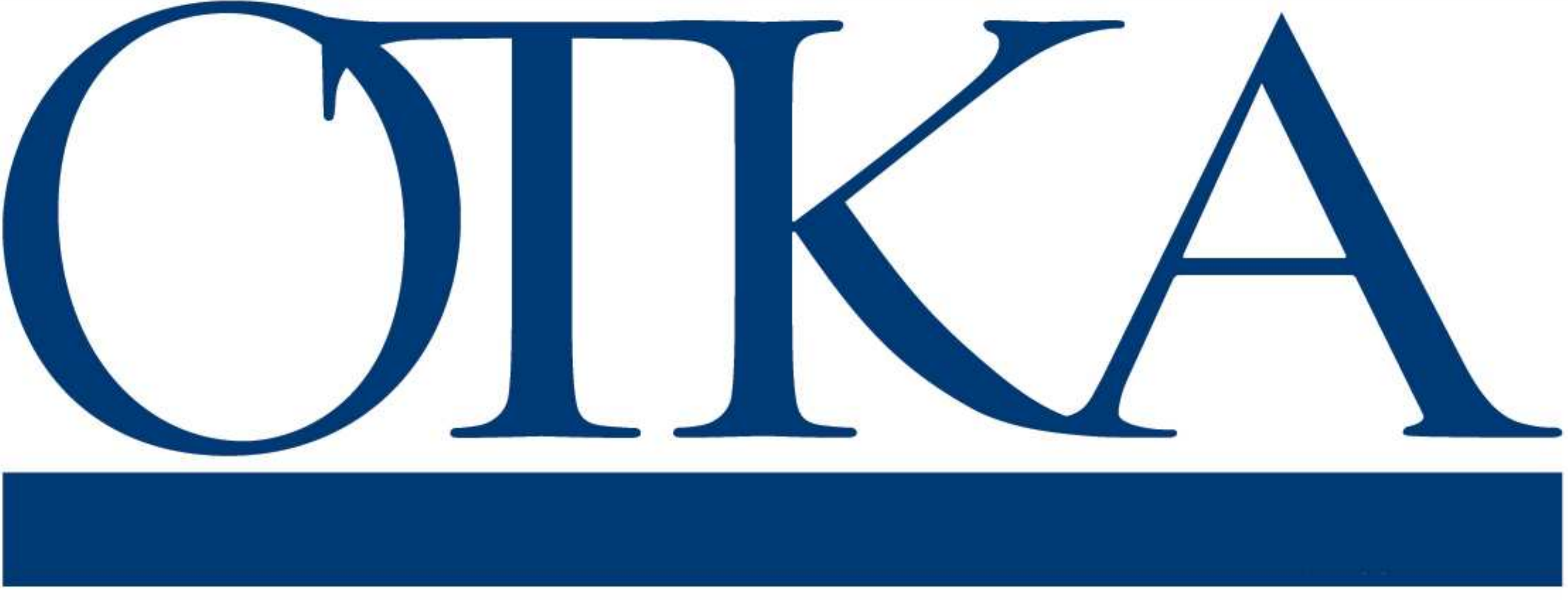}

\bigskip

\end{document}